\renewcommand{\baselinestretch}{1.2}
\renewcommand{\arraystretch}{1.2}
\newcommand{\bfa}{{\mathbf{a}}}
\newcommand{\bfg}{{\mathbf{g}}}
\newcommand{\bfh}{{\mathbf{h}}}
\newcommand{\bfb}{{\mathbf{b}}}
\newcommand{\bfx}{{\mathbf{x}}}
\newcommand{\bfy}{{\mathbf{y}}}
\newcommand{\bfz}{{\mathbf{z}}}
\newcommand{\wtk}{{\widetilde{k}}}
\newcommand{\wtn}{{\widetilde{n}}}
\newcommand{\rightarrowp}{{\succ^+}}
\newcommand{\Idemp}{{\sf Idemp}}
\newcommand{\Elim}{{\sf Elim}}
\newcommand{\QI}{{\sf QI}}
\newcommand{\EN}{{\sf EN}}
\newcommand{\EZ}{{\sf EZ}}
\newcommand{\NZ}{{\sf NZ}}
\newcommand{\ZN}{{\sf ZN}}
\newcommand{\NENZ}{{\sf NENZ}}
\newcommand{\FALSE}{{\sf FALSE}}
\newcommand{\bbZ}{{\mathbb Z}}
\newcommand{\fM}{{\frak M}}
\newcommand{\PS}{{\mathcal{PS}}}
\newcommand{\Valid}{{\sf Valid}}
\newcommand{\SAT}{{\sf SAT}}
\newcommand{\True}{{\sf TRUE}}
\newcommand{\False}{{\sf FALSE}}
\newcommand{\IT}[1]{{#1}^\star}
\newcommand{\ITI}{{I^\star}}
\newcommand{\SM}{{\mathcal{SM}}}
\newcommand{\bSM}{{\mathbf{SM}}}
\newcommand{\CE}{{\mathcal{CE}}}
\newcommand{\bbI}{{\mathbb{I}}}
\newcommand{\bPS}{{\mathbf{PS}}}
\newcommand{\PSA}{{\mathcal{PSA}}}
\newcommand{\SMA}{{\mathcal{SMA}}}
\newcommand{\pmbbZ}{\pmb{\mathbb Z}}
\newcommand{\plus}{{\;\cup\;}}
\newcommand{\summe}{\bigcup}
\newcommand\prodkt{\bigcap}
\newtheorem{theorem}{Theorem}[section]
\newtheorem{lemma}[theorem]{Lemma}
\newtheorem{corollary}[theorem]{Corollary}
\newtheorem{definition}[theorem]{Definition}
\newtheorem{example}[theorem]{Example}
\newtheorem{remark}[theorem]{Remark}
 \newcounter{thlistctr}
 \newenvironment{thlist}{\
 \begin{list}%
 {\alph{thlistctr}}%
 {\setlength{\labelwidth}{2ex}%
 \setlength{\labelsep}{1ex}%
 \setlength{\leftmargin}{6ex}%
 \usecounter{thlistctr}}}%
 {\end{list}}
\newcommand{\bfv}{{\mathbf v}}
\newcommand{\bA}{{\mathbf A}}
\newcommand{\bB}{{\mathbf B}}
\newcommand{\cC}{{\mathcal C}}
\def\now%
\def\timestamp%
\def\zeroPadTwo#1%
\title[Boole's Method I.]{Boole's Method I.\\ A Modern Version}
\author{Stanley Burris}
\author{H.P. Sankappanavar}
\date{\today}                                           
\begin{document}
\maketitle

\begin{abstract}
A rigorous, modern version of Boole's algebra of logic is presented, 
based partly on the 1890s treatment of Ernst Schr\"oder. 

\end{abstract}

\section{{Preamble to Papers I and II}}

The sophistication and mathematical depth of Boole's approach to the logic of 
classes is not commonly known, not even among logicians. 
It includes much, much more than just the basic operations and equational laws 
for an algebra of classes. Indeed, aside from possibly a few tricks to speed up
computations, Boole considered his algebra of logic to be the 
perfect completion of the fragmentary Aristotelian logic. 
Whereas the latter consisted of a small catalog of valid arguments, Boole's system 
offered a method (consisting of algebraic algorithms) to determine 
\begin{enumerate}
\item[(B1)] 
the strongest possible conclusion 
$\varphi(\vec{A}\:)$ from {\it any\/} given finite 
collection of premisses $\varphi_i(\vec{A},\vec{B}\:)$ concerning classes 
$A_1, \ldots, A_m$, $B_1,\ldots, B_n$, and
\item[(B2)] 
the expression of any class $A_i$ in terms of the other classes in {\it any\/} given finite 
collection of premisses $\varphi_i(\vec{A}\:)$ concerning classes $A_1, \ldots, A_m$.
\end{enumerate}
Boole's algebra of logic was developed well before concerns were raised about 
possible paradoxes in the study of classes.
To maintain contact with Boole's writings, as well as with modern set-theoretic 
foundations and notations, 
we will simply treat the words `class' and `set' as equivalent. 
To put everything into a more modern form,
simply change the word `class' everywhere into the word `set'.

With Boole's algebraic approach, the mastery of the logic of classes changed dramatically
from the requirement of memorizing a finite and
 very incomplete catalog of valid arguments in Aristotelian logic to the requirement of learning: 
 \begin{enumerate} 
 \item[(a)]
how to translate class-propositions into class-equations, and vice-versa, 
\item[(b)]
the axioms and rules of inference for Boole's algebra of logic, and  
\item[(c)]
the fundamental theorems of Boole's algebra of logic.
\end{enumerate}
 
Boole never precisely stated which ordinary language statements 
$\varphi_i(\vec{A}\:)$ qualified as class-propositions, that is, 
propositions about classes, although he gave many examples.
By 1890 Schr\"oder concluded that any class-proposition was equivalent 
to a {\em basic formula} in the modern Boolean algebra of sets, that is,
 either to an equational assertion 
$p(\vec{A}\:) = q(\vec{A}\:)$, or the negation
$p(\vec{A}\:) \neq q(\vec{A}\:)$
of an equational assertion.  The four forms of 
 categorical propositions from Aristotelian logic are readily 
 seen to satisfy this condition:
 $$
 \begin{array}{c | l | c}
\text{Form} & \text{Ordinary language} & \text{Equational form}\\
 \hline
\text{A} & \text{All } A \text{ is } B & A = A\cap B\\
\text{E} & \text{No } A \text{ is } B & A\cap B = \O\\
\text{I} & \text{Some } A \text{ is } B & A \cap B \neq \O\\
\text{O} & \text{Some } A \text{ is not } B & A\cap B' \neq \O
\end{array}
$$
 
 Boole allowed more complex assertions, such as 
`All $A$ is $B$ or $C$', which can be expressed by $A = A\cap (B\cup C)$.
The converse, that every basic formula $\beta(A_1,\ldots,A_m)$
 in the modern Boolean algebra of sets can be expressed by a proposition in ordinary language, 
 is not so clear---ordinary language 
suffers from not using parentheses to group terms.
For example, it is cumbersome to express $A \cap (B\cup (C \cap D))) = \O$ in ordinary
language; but with parentheses it is easy, namely 
`The class $A$ intersected with the class 
($B$ unioned with the class ($C$ intersected with the class $D$)) is empty'.
Without parentheses one needs the cumbersome method of introducing new symbols,
for example, `There are classes $E$ and $F$ such that $F$ is the intersection of $C$ and $D$,
and $E$ is the union of $B$ and $F$, and $A$ and $E$ are disjoint'. 

{\it We will simply assume that class-propositions correspond precisely to basic formulas in the modern Boolean algebra of classes. Furthermore we assume that
the reader knows how to translate between class-propositions and basic formulas.}

The word `algebra' has two major meanings in mathematics---we first learn to think of algebra as 
{\em procedures}, such as finding the roots of a quadratic equation; later we learn that it can also refer 
to a {\em structure} such as the ring of integers $\pmbbZ = (\bbZ, +,\cdot, -, 0, 1)$, or the power set
algebra 
$\bPS(U) = (\PS(U),\cup, \cap,',\O,U)$ of subsets of $U$.

When Boole introduced and refined his algebra of logic for classes, from 1847 to 1854, 
he was primarily interested in procedures to determine the items in (B1) and (B2) above. 
Given a finite list of class-propositions 
$\varphi_i$
for the premisses, the first step was to convert them into {\em equations}
$p_i= q_i$.
(Note: Schr\"oder thought it was necessary to use basic formulas, not just equations. 
Boole believed he only needed equations.)
Then he gave algebraic algorithms for (B1) and (B2) in the setting of equations.
 The result was then
translated back into ordinary language to give the desired class-proposition conclusion.
%

\begin{remark}
The reader can find a detailed presentation of Boole's algorithms with examples, but without proofs, 
in the article {\em George Boole}, 
in the online Stanford Encyclopedia of Philosophy \cite{Burris-SEP}.
\end{remark}

Boole's version of the algebra of logic for classes was significantly different from
what we now call Boolean algebra---but 
it led directly to modern Boolean algebra, thanks to Jevons \cite{Jevons-1864} replacing Boole's partial operations by total operations.  
Scholars had from the very beginning at least three major concerns 
about Boole's system: 
\begin{enumerate}
\item
It seemed unduly and mysteriously tied to the algebra of 
numbers, the so-called {\it common algebra}---for the fundamental operations on classes, and the fundamental constants, 
Boole chose the symbols 
$+$, $\cdot$, $-$, 0 and 1, symbols traditionally reserved for the algebra of numbers. 
(Boole also used
division, but only in a very special setting.)
 His manipulation of equations was dictated by 
the procedures used
in common algebra, {\bf with one addition:} multiplication  was idempotent for {\em class-symbols}, 
that is, $A^2 = A$ for any class-symbol $A$.
\item
Boole interpreted 0 as the empty class, 1 as the universe, and the multiplication of classes as
their intersection. But his operations of addition ($+$) and subtraction ($-$) on classes 
were {\em partial operations}, not total operations; that is, they were only partially defined. 
If two classes $A$ and $B$ had elements in common, then $A+B$ was simply 
not defined
(or, as Boole said, $A+B$ was not interpretable). Likewise, if $B$ was not a subclass of $A$, 
then $A-B$ was not defined; otherwise $A-B$ was $A \cap B'$, the class of elements in $A$
but not in $B$.

The difficulty readers had with Boole's partial operations was that Boole applied the processes 
of common algebra to
equations without being concerned about whether the terms were defined or not. 
In modern universal algebra we know that the usual rules of equational inference
(Birkhoff's five rules) are {\em correct} and {\em complete} for the equational logic 
of {\em total} algebras, that is, algebras with fundamental operations that are totally 
defined on the domain of the algebra. Unfortunately these properties may not hold 
when working
with partial algebras. 

With Boole's system, the question was whether or not
the application of the usual rules of equational inference always leads to correct results
when one starts with meaningful premisses and ends with a meaningful conclusion, but
not all the equations appearing in the intermediate steps are meaningful. (Boole claimed
that the answer was `yes'.)
\item
Boole claimed that he could translate particular propositions into equations by introducing a 
new symbol $V$. 
For example,
`Some $A$ is $B$' was translated initially by $V = AB$, and later by $VA = VB$. 
\end{enumerate}
Items (1) and (2) remained troublesome issues for more than a century, until the appearance of 
Hailperin's book 
\cite{Hailperin-1976} in 1976. 
He set these concerns aside
by noting that each partial algebra $\bB(U) = (\PS(U),+,\cdot,-,0,1)$ in Boole's setting 
could be embedded in a total algebra of signed multi-sets;
this is equivalent to saying that $\bB(U)$ can be embedded in the ring $\pmbbZ^U$ (see, for example, \cite{BuSa-Bulletin}).
Regarding item (3),
Schr\"oder `proved' that one had to use negated equations for propositions with existential import.
 (This approach made the introduction of a new symbol $V$ quite unnecessary).
Item 3 has remained a concern...we will show that Boole's view, that only equations are needed,  
is actually correct as well (that is, after we make a very small adjustment to his translations 
between class-properties and  class-equations).

Boole's algorithms are powerful tools in the study of classes, and they 
carry over almost verbatim to the setting of modern Boolean algebra. 
We will adapt Boole's algebra of logic for classes (his theorems and algorithms)
 to the modern setting in this paper, essentially
along the lines laid out in 
the 1890s by Schr\"oder. This will allow the reader to understand
and judge the importance of Boole's work, without the hinderance of possibly many 
nagging concerns  regarding whether or not one has properly understood 
all the nuances of meaning in Boole's writings. 
This modern version of Boole's work will include a discussion of item (3) above, showing that
indeed one only needs equations (refuting Schr\"oder's claim to have proved the contrary).


 In the second paper we turn to Boole's original system (compactly presented in the
 aforementioned SEP article) 
 and provide full details of the proofs (using the results of this first paper), including addressing item (3) above. 
 The controversy-free presentation in this first paper will hopefully make it easier for the reader 
to focus in the second paper on how the concerns regarding (1) and (2)
 in Boole's system are overcome.  Furthermore this first paper sets the stage for how we will resolve the concerns about
  item (3) in Boole's system.

 In closing this Preamble, we would like to mention that, in \cite{BuSa-Bulletin}, Boole's claim that his ``Rule of $0$ and $1$'' is sufficient to prove his theorems is vindicated.                                      

\subsection{Introduction}

More specifically, Boole's algebra of logic (1847/1854) offered 
\begin{itemize}
\item
a translation of propositions into equations, 
\item
an algorithm for eliminating symbols in the equations,  
\item
an algorithm for solving for a variable, and
\item
a reverse translation, from conclusion equations 
to conclusion propositions.
\end{itemize} 
This algebra of logic has long puzzled readers for many reasons, including: 
\begin{thlist}
\item
its foundation, which appears to be the `common' algebra, namely the algebra 
of numbers, augmented by idempotent variables, 
\item
the appearance of uninterpretable terms in various procedures, 
\item
a strange division procedure, 
\item
a dubious encoding of propositions as equations, especially the particular 
propositions (using his famous $V$), and 
\item
dubious proofs of the main theorems. 
\end{thlist}
Yet the system seemed, by and large, to work just as Boole said it 
would.\footnote
{In 1864 Jevons \cite{Jevons-1864} modified Boole's system, giving the basic structure that 
would develop into modern Boolean algebra.
}
The mechanical details of Boole's method of using algebra to analyze arguments are given 
in considerable detail in the article ``George Boole'' in the online Stanford Encyclopedia of Philosophy 
(see \cite{Burris-SEP}). Now we turn to the justification of his method.

This first paper gives a compact yet rigorous modern version of Boole's 
algebra of logic. It is based in good part on Volumes I and II of Schr\"oder's 
{\em Algebra der Logik} \cite{Schr}, published in the 1890s. These results, along with 
the remarkable insights of Hailperin (\cite{Hailperin-1976} 1976/1986), are used in the second 
paper (\cite{BuSaII}) to likewise give a compact, rigorous presentation of  Boole's original 
algebra of logic.\footnote
{Brown \cite{Brown-2009} has given a fairly compact treatment of the development of the algebra 
used by Boole, showing that a certain ring (a ring of polynomials modulo idempotent generators) 
satisfies Boole's theorems. However this does not show that Boole's algebra of logic gives a correct 
calculus of classes, as Boole claimed, and the author seems to suggest. 
Boole's algebra of logic has partial operations, and one cannot simply apply Birkhoff's rules of 
equational logic to partial algebras. Hailperin \cite{Hailperin-1976} extended Boole's partial algebra 
to a total algebra of signed multisets, and for such an algebra Birkhoff's rules apply---this, or some 
step connecting Boole's partial algebra to Birkhoff's rules, is missing in Brown's treatment. Hailperin's 
work falls short of being complete 
by 
the absence of his justification to Boole's use of equations to express particular 
propositions--we address this in our second paper.
} 

\section{Background}
%

For purposes of indexing we prefer to use sets of the form 
$\widetilde{n} := \{1,\ldots,n\}$ instead of the usual finite ordinals
$n := \{0,\dots,n-1\}$.

Given a universe $U$, the {\bf power set} $PS(U)$ of $U$ is the set
of subsets of $U$. The {\bf power-set algebra} $\bPS(U)$ is the algebra
$(PS(U),\plus ,\cap , ', \O, U)$ of subsets of $U$. 
$\PSA$ is the collection of
power-set algebras $\bPS(U)$ with $U\neq \O$.

For the syntactic side of power-set algebra we use 
the {\bf operation symbols}  $\cup  $ ({\bf union}), 
$\cap$ ({\bf intersection}) and $'$ ({\bf complement}); 
and the {\bf constants}  0 (the {\bf empty set}) 
and 1 (the {\bf universe}).
There is a countably infinite set $X$ of {\bf variables}, and the 
$\PSA$-{\bf terms} $p(\bfx) := p(x_1,\ldots, x_k)$ are constructed from 
the above operation symbols, constant symbols and variables, in the 
usual way by induction:
\begin{itemize}
\item
 variables and constants are $\PSA$-terms;
\item
 if $p$ and $q$ are $\PSA$-terms then so are $(p')$, $(p\plus  q)$,  and $(p\cap q)$.
\end{itemize}
We adopt the usual convention of not writing outer parentheses.
We often write $p\cdot q$, or simply $pq$, instead of $p\cap q$. 
It will be assumed that intersection takes precedence over union, 
for example, $p \cup qr$ means $p \cup (q\cap r)$.
It will be convenient
to adopt the {\bf abbreviations} $p \subseteq q$ and $q \supseteq p$
for $p = pq$, or equivalently, $pq'=0$.

{\bf (First-order) $\PSA$-formulas} are defined inductively:
\begin{itemize}
\item
{\bf $\PSA$-equations}, that is, expressions of the form $(p=q)$,  where $p$ and $q$
are $\PSA$-terms, are $\PSA$-formulas
\item
if $\varphi$ is a $\PSA$-formula then so is $(\neg\,\varphi)$
\item
if $\varphi_1$ and $\varphi_2$ are $\PSA$-formulas, then so are 
$(\varphi_1 \wedge \varphi_2)$,  $(\varphi_1 \vee \varphi_2)$,
$(\varphi_1 \rightarrow \varphi_2)$, and $(\varphi_1 \leftrightarrow \varphi_2)$
\item
if $\varphi$ is a $\PSA$-formula and $x\in X$, then $\big((\forall x)\varphi\big)$ and 
$\big((\exists x)\varphi\big)$ are $\PSA$-formulas.
\end{itemize}
Again we adopt the usual convention of not writing outer parentheses. The notation
$(\exists \bfx)$ stands for $(\exists x_1)\cdots(\exists x_k)$, where $\bfx$ is the list
$x_1,\ldots,x_k$.

An {\bf interpretation $I$ into $\bPS(U)$} is a mapping $I : X \rightarrow PS(U)$
that is extended by induction to all terms as follows: 
\begin{itemize}
\item
$I(0) := \O$,\  $I(1) := U$
\item
 $I(p') := I(p)'$
\item
$I(p\plus  q) := I(p) \plus   I(q)$
\item
$I(p  q) := I(p)\cap I(q)$.
\end{itemize}
$I$ is a {\bf $\PSA$-interpretation} if it is an interpretation into some $\bPS(U)$.

The notion of a (first-order) $\PSA$-formula $\varphi$ being true under an interpretation $I$
into $\bPS(U)$,
written $I(\varphi) = \True$, 
is recursively defined as follows, where $I(\varphi)=\False$ means $I(\varphi)\neq \True$:
\begin{itemize}
\item
$I(p = q) = \True$ iff  $I(p) = I(q)$; 

\item
$I(\neg \varphi) = \True $ iff $I(\varphi) = \False$; 

\item
$I(\varphi \vee \psi) = \True$  iff either $I(\varphi) = \True$  or
$I(\psi) = \True$; 

\item
$I(\varphi \wedge \psi) = \True$ iff both $I(\varphi) = \True$ and
$I(\psi) = \True$; 

\item
$I(\varphi \rightarrow \psi) = \True$ iff  $I(\varphi) = \False$ or
$I(\psi) = \True$; 

\item
$I(\varphi \leftrightarrow \psi) = \True$ iff  both $I(\varphi\rightarrow \psi) = \True$ 
and $I(\psi \rightarrow \varphi) = \True$;

\item
$I\big( (\forall x) \varphi \big) = \True$ iff for each interpretation $\widehat{I}$ into $\bPS(U)$
 that agrees
with $I$ on $X\smallsetminus \{x\}$, one has $\widehat{I}(\varphi) = \True$;

\item
$I\big( (\exists x) \varphi \big) = \True$ iff for some interpretation $\widehat{I}$ into $\bPS(U)$
 that agrees
with $I$ on $X\smallsetminus \{x\}$, one has $\widehat{I}(\varphi) = \True$.
\end{itemize}
Note that $I(p\subseteq q) = \True$ iff $I(q\supseteq p) = \True$ iff $I(p) \subseteq I(q)$.

Some additional notation that we will use is:
\begin{itemize}
\item
$\PSA \models \varphi$, read {\bf $\PSA$ satisfies $\varphi$},
 means $I(\varphi)=\True$ for every interpretation $I$ into a member of $\PSA$.
 
 \item
$\varphi_1,\ldots,\varphi_n \models_\PSA \psi$, or 
$\varphi_1,\ldots,\varphi_n \Rightarrow_\PSA \psi$,
read  {\bf $\PSA$ (semantically) implies $\varphi$},
means
$\PSA \models \left(\varphi_1 \wedge \cdots \wedge \varphi_n  \rightarrow \psi\right)$.

\item
 $\varphi$ and $\psi$ are $\PSA$-{\bf (semantically) equivalent}, 
written $\varphi \Leftrightarrow_\PSA \psi$, if 
$\PSA \models \big(\varphi \leftrightarrow \psi\big)$.

\item
Two finite sets (or lists) $\Phi$ and $\Psi$ of formulas are $\PSA$-{\bf (semantically) equivalent}, 
written $\Phi \Leftrightarrow_\PSA \Psi$, if 
$\PSA \models \Big(\bigwedge\Phi \leftrightarrow \bigwedge\Psi\Big)$.           

\item
A finite set (or list) $\Phi$ of $\PSA$-formulas is {\bf $\PSA$-satisfiable}, written
$\SAT_\PSA\big(\Phi \big)$, if there is a $\PSA$-interpretation $I$
such that $I\Big(\bigwedge  \Phi \Big) = \True$.

\item
An argument  $\varphi_1,\ldots,\varphi_n\ \therefore\ \psi$
is {\bf $\PSA$-valid}, or {\bf valid in $\PSA$}, also written as 
$\Valid_\PSA \big(\varphi_1,\ldots,\varphi_n\ \therefore\ \psi\big)$,
means $\varphi_1,\ldots,\varphi_n \models_\PSA \psi$.

\end{itemize}

\begin{remark} 
Since this paper 
only deals with algebras from $\PSA$, 
the prefix and subscript $\PSA$, etc., {\em will usually be omitted}.
\end{remark}

{\bf Basic formulas}  are {\bf equations} $p(\bfx) = q(\bfx)$ and 
{\bf negated equations} $p(\bfx) \neq q(\bfx)$. 
They suffice to express a variety of propositions about sets, 
including the famous Aristotelian categorical propositions.\footnote{
Schr\"oder used $\neq 0$ to translate particular propositions into symbolic
form in his {\em Algebra der Logik} (p.~93 in Vol. II). 
In this work he also `proved' that Boole's efforts to translate 
particular propositions by equations (using the infamous symbol $V$) 
must fail (pp.~91-93 in Vol. II). 
Yet in $\S$\ref{sec on V} of this paper,
the reader will find that a slight variation on Boole's use of $V$ indeed 
works {\em in the context of valid arguments}. 
And in $\S$\ref{sec on Velim} we will find that it works {\em in the context
of elimination} as well.
}
For example, the assertion `All $x$ is $y$' is expressed by $x = xy$, or equivalently,
$xy' = 0$, since for
any interpretation $I$ in a power-set algebra $\bPS(U)$, one has, setting
$A := I(x)$ and $B := I(y)$, `All $A$ is $B$' holding iff $A\subseteq B$, and this
holds iff $A = A B$, or equivalently, $AB' = \O$.
The following table gives a sampler of propositions that can be expressed
by a basic formula:
\begin{quote}
\begin{tabular}{l  | l  | l}
Proposition&Basic Formula&Alternative\\
\hline
$x$ is empty & $x = 0$\\
$x$ is not empty & $x \neq 0$\\
\hline
All $x$ is $y$ & $xy' = 0$ & $x = xy$\\
No $x$ is $y$ & $xy = 0$& $x = xy'$\\
Some $x$ is $y$ & $xy \neq 0$\\
Some $x$ is not $y$ & $xy' \neq 0$\\
\hline
$x$ and $y$ are empty & $x\plus  y = 0$\\
$x$ and $y$ are disjoint & $xy = 0$\\
$x$ is empty and $y$ is the universe & $x \plus   y' = 0$\\
$x$ or $y$ is not empty &$x\plus  y \neq 0$\\
etc.&
\end{tabular}
\end{quote}
However some simple relationships among sets cannot be expressed
by basic formulas, for example, `$x$ is empty or $y$ is empty', 
`$x$ is empty implies $y$ is empty', 
 `there are at least 2 elements in the universe', etc.

Propositions are usually formulated in ordinary language, with a few
symbols, like `All $S$ is $P$'. It is not so easy to precisely describe
all the ordinary language statements that qualify as propositions
about sets. To get around this awkward situation we simply
define our {\bf domain of propositions} about sets to be
 all propositions $\pi(\bfx)$
which can be expressed by basic formulas $\beta(\bfx)$. 
Then it is automatic that a list of propositional premisses
$$
\pi_1(\bfx),\ldots, \pi_n(\bfx) 
$$
can be expressed by a list of basic formulas
$$
\beta_1(\bfx),\ldots, \beta_n(\bfx);
$$
and a propositional argument
$$
\pi_1(\bfx),\ldots, \pi_n(\bfx) \ \therefore\ \pi(\bfx)
$$
can be expressed by a basic-formula argument
$$
\beta_1(\bfx),\ldots, \beta_n(\bfx)\ \therefore\ \beta(\bfx).
$$

Boole focused on two themes in his algebra of logic, namely 
given a  list 
$\pi_1(\bfx,\bfy)$,\ldots, $\pi_n(\bfx,\bfy) $ 
of propositional premisses:
\begin{thlist}
\item
how to find the `complete' result $\pi(\bfy)$ 
of eliminating the variables $\bfx$
from the propositional premisses; and
\item
how to express a variable $x_i$ in terms of the other variables, given
the propositional premisses.
\end{thlist}
In our modern version, the propositional premisses correspond exactly 
to basic-formula premisses
$\beta_1(\bfx,\bfy),\ldots, \beta_n(\bfx,\bfy)$,
and the propositional themes are clearly equivalent to the basic-formula
themes:
\vspace{-.5cm}
\begin{thlist}
\item
how to find the `complete' result $\beta(\bfy)$ of eliminating 
the variables $\bfx$
from the basic-formula premisses; and
\item
how to express a variable $x_i$ in terms of the other variables, given
the basic-formula premisses.
\end{thlist}

In 1854 Boole \cite{Boole-1854} presented a General Method for tackling these questions about 
propositions, a method that used only equations, thus avoiding the use of negated 
equations. (For a summary of Boole's General Method in modern notation, see
\cite{Burris-SEP}.)
We can parallel essentially all of Boole's General Method in the modern framework 
described above, with the advantage that neither the translations nor the 
methods are suspect.

\section{Axioms and Rules of Inference} \label{sec ARI}

The \textit{laws} or \textit{axioms} for the three set operations
are as follows, where $p$, $q$ and $r$ are any three terms---
these basic formulas are satisfied by $\PSA$:
\small
$$
\begin{array} {l | l | l}
 p\plus    p =  p  &
 p \cdot  p =  p & \text{Idempotent Laws}\\
  p\plus    0 =  p  &
 p \cdot   0 =  0 & \text{0 Laws}\\
   p\plus    1 =  1  &
 p \cdot   1 =  p & \text{1 Laws}\\
 p \plus    q =   q \plus    p  &
 p \cdot   q =  q \cdot   p &
 \text{Commutative Laws}\\
 p \plus   (  q \plus    r ) = ( p \plus     q ) \plus    r &
 p \cdot  (  q \cdot    r )  = ( p \cdot     q ) \cdot    r &
 \text{Associative Laws}\\
 p \plus   p\cdot    q =  p &
 p\cdot   ( p \plus    q) =  p &
 \text{Absorption Laws}\\
 p \plus    q \cdot   r = ( p \plus    q)\cdot   ( p \plus    r) &
 p \cdot  ( q \plus    r) =  p \cdot   q \plus    p\cdot    r &
 \text{Distributive Laws}\\
 p \plus    p' =  1 &
 p \cdot   p' =  0 &
 \text{Complement Laws} \\
 (p\plus  q)' = p'\cdot   q' &
 (p\cdot   q)' = p' \plus   q' &
 \text{De Morgan Laws}\\
\multicolumn{3}{l}{\text{and there is one inequality}}\\
1 \neq 0&&
\text{Non-empty Universe}
\end{array}
$$
\normalsize
(These axioms are somewhat redundant.)

The usual \textit{equational rules of inference}, 
where $p,q,r,s$ are any four terms, are:\\
\small
\begin{itemize}
\item
the reflexive, symmetric and transitive rules for equality 
\bigskip
\item
$\displaystyle \frac{p=q}{ p' = q'}$ 
 \quad\hfill (Complement of equals)
\bigskip
\item
$\displaystyle \frac{p=q,\ r=s}{ p\plus  r = q\plus  s}$ 
 \quad\hfill (Union of equals)
\bigskip
\item
$\displaystyle \frac{p=q,\ r=s}{ p \cdot r = q \cdot  s}$ 
 \quad\hfill (Intersection of equals)
\bigskip
 \end{itemize}
 \normalsize
 
 [NOTE: The last three rules are equivalent to the {\em replacement} rule.]

\subsection{A Standard Form}

Every equation $p=q$ can be put in a {\bf standard form} $r=0$.
\begin{lemma} [{\bf Standard Form}] \label{std form}
An equation $p(\bfx)=q(\bfx)$ is equivalent to an equation in the form 
$r(\bfx)=0$, namely let $r(\bfx) = p(\bfx)\, \triangle\, q(\bfx)$, the symmetric
difference of $p(\bfx)$ and $q(\bfx)$, which is defined by:
$$
p\, \triangle\, q\ :=\ p\cdot q'\ \plus\ p' \cdot q. 
$$
\end{lemma}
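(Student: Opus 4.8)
The plan is to read ``equivalent'' in the sense fixed in the Background section, namely $\PSA$-semantic equivalence, so that the goal is $\PSA \models \big((p=q)\leftrightarrow (r=0)\big)$ for $r := p\triangle q$. Unwinding the definition of $\PSA$-satisfaction, this is the assertion that for every interpretation $I$ into every power-set algebra $\bPS(U)$ one has $I(p=q)=\True$ exactly when $I(r=0)=\True$, so it suffices to fix one such $I$ and compare the two truth values.

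First I would fix an arbitrary $\PSA$-interpretation $I$ into some $\bPS(U)$ and write $A:=I(p)$ and $B:=I(q)$, two subsets of $U$. Applying the inductive clauses for $I$ on terms to the defining expression $p\cdot q' \plus p'\cdot q$ gives $I(r) = (A\cap B')\cup(A'\cap B)$, which is exactly the set-theoretic symmetric difference $A\triangle B$. Hence $I(p=q)=\True$ means $A=B$, whereas $I(r=0)=\True$ means $A\triangle B=\O$, and the whole lemma collapses to the elementary fact that $A=B$ if and only if $A\triangle B=\O$.

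That fact I would verify directly: a union of two subsets of $U$ is empty precisely when each part is empty, so $A\triangle B=\O$ holds iff $A\cap B'=\O$ and $A'\cap B=\O$; by the abbreviation introduced earlier ($p\cap q'=0$ expresses $p\subseteq q$), these two conditions read $A\subseteq B$ and $B\subseteq A$, whose conjunction is $A=B$. Since $I$ was arbitrary, the two equations hold under the same interpretations and the equivalence follows. The only point deserving care---and hence the ``main obstacle,'' such as it is---is the clash between the term-forming symbol $\triangle$ (an abbreviation for the term $p\cdot q'\plus p'\cdot q$) and set-theoretic symmetric difference; I would therefore keep the step $I(r)=A\triangle B$ explicit rather than conflate the two. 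A purely equational derivation from the axioms of Section~\ref{sec ARI} is also available, but the semantic argument is shorter and matches the adopted meaning of ``equivalent.''
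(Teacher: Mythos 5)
Your argument is correct. Note that the paper itself states this lemma without any proof at all, treating it as routine; your semantic verification---fix an arbitrary interpretation $I$ into some $\bPS(U)$, observe that $I(p\,\triangle\, q)$ is the set-theoretic symmetric difference of $A:=I(p)$ and $B:=I(q)$, and reduce the claim to the elementary fact that $A=B$ iff $A\,\triangle\, B=\O$---is exactly the standard justification the authors are implicitly relying on, and your care in distinguishing the syntactic abbreviation $\triangle$ from the set operation it denotes is appropriate.
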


\section{Constituents}

Boole introduced constituents to provide a basis for expanding terms.

\begin{definition} Given a list of variables $\bfx := x_1,\ldots,x_k$, the 
$2^k$ {\bf consitituents} of $\bfx$ are the following terms:

$\begin{array}{l}
x_1x_2\cdots x_k\\
x_1'x_2\cdots x_k\\
\quad\vdots\\
x_1'x_2'\cdots x_k'.
\end{array}$\\
These are called the {\bf $\bfx$-constituents}.
A useful notation for referring to them is as follows.
For $\sigma\in 2^{\widetilde{k}}$, that is, for $\sigma$ a mapping from 
$ \{1,\ldots,k\}$ to $\{0,1\}$,
 let
$$
\cC_\sigma(\bfx)\ :=\ \cC_{\sigma_1}(x_1)\cdots \cC_{\sigma_k}(x_k),
$$
where $\cC_1(x_j) := x_j$ and $\cC_0(x_j) := x_j' $ and $\sigma_i := \sigma(i)$.
\end{definition}
Thus, for example, with $k=5$ and $\sigma = 01101$, 
$\cC_\sigma(\bfx) = x_1'x_2x_3x_4'x_5$.

\begin{lemma} \label{basic constit}
For $i,j \in \{0,1\}$, $\PSA$ satisfies
\begin{eqnarray}
\cC_{i}(j)&=& 1\quad \text{if }\  j = i  \label{i,j equal} \\
\cC_{i}(j)&=& 0 \quad \text{if }\  j \neq i. \label{i,j nequal} 
\end{eqnarray}
\end{lemma}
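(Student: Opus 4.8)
The plan is to prove both identities by a direct, exhaustive case analysis over the four pairs $(i,j)$ with $i,j \in \{0,1\}$, since each index ranges over a two-element set. The only work is to unwind the definition of the symbols $\cC_0$ and $\cC_1$ and then simplify the resulting constant terms.

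First I would record, from the definitions $\cC_1(x_j) := x_j$ and $\cC_0(x_j) := x_j'$, what each of the four terms $\cC_i(j)$ actually is once the constant $j \in \{0,1\}$ is substituted for the argument: $\cC_1(1)$ is the term $1$, $\cC_1(0)$ is the term $0$, $\cC_0(1)$ is the term $1'$, and $\cC_0(0)$ is the term $0'$. The two cases with $i = j$, which feed \eqref{i,j equal}, are then $\cC_1(1) = 1$ (immediate) and $\cC_0(0) = 0'$, which requires the auxiliary fact $0' = 1$. The two cases with $i \neq j$, which feed \eqref{i,j nequal}, are $\cC_1(0) = 0$ (immediate) and $\cC_0(1) = 1'$, which requires the auxiliary fact $1' = 0$.

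Hence the only content beyond definition-chasing is to check that $\PSA$ satisfies $0' = 1$ and $1' = 0$. I would do this semantically: for any interpretation $I$ into a power-set algebra $\bPS(U)$ one has $I(0') = I(0)' = \O' = U = I(1)$, and likewise $I(1') = I(1)' = U' = \O = I(0)$, so both equations hold under every $\PSA$-interpretation. (Alternatively, they follow purely equationally from the Complement Laws $p \cdot p' = 0$ and $p \plus p' = 1$ together with the $0$ and $1$ Laws, by instantiating $p := 0$ and $p := 1$.)

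The main obstacle here is essentially nonexistent: this is a base-case verification whose role is to support the later expansion-of-terms machinery built on constituents. The one point worth flagging explicitly is that $0' = 1$ and $1' = 0$ are not literally listed among the displayed axioms and should be justified as immediate consequences rather than silently assumed, so that the four-case check is genuinely complete.
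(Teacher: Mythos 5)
Your proof is correct and takes essentially the same route as the paper, whose entire proof is the single line ``From the definition of $\cC_i(x)$''; you have simply made explicit the four-case unwinding and the auxiliary facts $0'=1$ and $1'=0$ that the paper leaves tacit.
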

\begin{proof}
From the definition of $\cC_i(x)$.
\end{proof}

\begin{lemma}  \label {constit val}
For $\sigma, \tau \in 2^{\wtk}$, and $p(\bfx)$ a term, $\PSA$ satisfies
\begin{align}
p(\sigma)=1 \ \text{or}\ p(\sigma)=0 \label{t(sig)}\\
 \cC_\sigma(\tau) = 1 &\quad \text{if } \sigma = \tau\label{s,t equal}\\
 \cC_\sigma(\tau) = 0 &\quad \text{if } \sigma \neq \tau . \label{s,t nequal}
\end{align}
\end{lemma}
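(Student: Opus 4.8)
The plan is to prove \eqref{t(sig)} by structural induction on the term $p$, and to derive \eqref{s,t equal} and \eqref{s,t nequal} directly from Lemma~\ref{basic constit} together with the $0$ and $1$ Laws. Throughout, the key observation is that $p(\sigma)$ and $\cC_\sigma(\tau)$ are \emph{closed} terms, built only from the constants $0,1$ and the operation symbols, so their value under any interpretation $I$ into $\bPS(U)$ is forced by the axioms; in particular, a claim of the shape ``$\PSA$ satisfies $t=1$ or $t=0$'' for a closed term $t$ amounts to checking that the axioms alone reduce $t$ either to $1$ or to $0$.

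For \eqref{t(sig)}, I would first record the two auxiliary identities $0'=1$ and $1'=0$, each provable from the Complement and $0/1$ Laws (e.g. $0'=0'\cup 0=0\cup 0'=1$, and dually $1'=1'\cdot 1=1\cdot 1'=0$). The base case of the induction handles a variable $x_i$, which $\sigma$ sends to the constant $\sigma_i\in\{0,1\}$, and the constants $0,1$ themselves. For the inductive step I would show that $\{0,1\}$ is closed under the three operations when evaluated by the axioms: $0\cup 0=0$, $1\cup 1 = 1$, $0 \cup 1 = 1\cup 0 = 1$ (Idempotent and $1$ Laws), the dual facts for $\cap$, and $0'=1$, $1'=0$ from above. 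Hence if the immediate subterms of $p(\sigma)$ each reduce to $0$ or $1$, so does $p(\sigma)$.

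For \eqref{s,t equal} and \eqref{s,t nequal}, I would expand the constituent using its definition: substituting $\tau_j$ for $x_j$ gives $\cC_\sigma(\tau) = \cC_{\sigma_1}(\tau_1)\cdots \cC_{\sigma_k}(\tau_k)$. By Lemma~\ref{basic constit}, each factor $\cC_{\sigma_j}(\tau_j)$ equals $1$ when $\tau_j = \sigma_j$ and $0$ when $\tau_j \neq \sigma_j$. If $\sigma=\tau$, every factor is $1$, so repeated use of the $1$ Law $p\cdot 1 = p$ collapses the product to $1$, giving \eqref{s,t equal}. If $\sigma\neq\tau$, there is an index $j$ with $\sigma_j\neq\tau_j$, so the corresponding factor is $0$, and the $0$ Law $p\cdot 0 = 0$ (using Commutativity and Associativity to bring that factor into position) forces the whole product to $0$, giving \eqref{s,t nequal}.

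I do not expect a serious obstacle: the argument is a routine induction plus bookkeeping with the axioms already listed. The only point demanding care is the logical reading of \eqref{t(sig)} as a disjunction satisfied by $\PSA$---one must confirm that, because $p(\sigma)$ is closed, the disjunct that holds is the same across all interpretations, so that the induction on term structure (which in fact proves one of the two equations outright at each stage) legitimately establishes the satisfied disjunction.
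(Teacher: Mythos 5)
Your proof is correct and follows essentially the same route as the paper: \eqref{t(sig)} by structural induction on the term, and \eqref{s,t equal}--\eqref{s,t nequal} by applying Lemma~\ref{basic constit} factorwise to $\cC_{\sigma_1}(\tau_1)\cdots\cC_{\sigma_k}(\tau_k)$ and collapsing with the $0$ and $1$ Laws. You simply supply more of the routine axiom-level bookkeeping (e.g.\ $0'=1$, $1'=0$, closure of $\{0,1\}$ under the operations) that the paper leaves implicit.
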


\begin{proof}
The first item is proved by induction on the term $p(\bfx)$.

For the second item,
 suppose $\sigma = \tau$. 
Then $\cC_{\sigma_j}(\tau_j)\ =\ 1$  for $j =1,\ldots, k$, by \eqref{i,j equal}, so
$$
\cC_\sigma(\tau) \ :=\   \cC_{\sigma_1}(\tau_1)\cdots \cC_{\sigma_{k}}(\tau_{k}) \ =\  1.
$$

Finally, suppose that  $\sigma \neq \tau$. 
For some $i$ we have
$\sigma_i \neq \tau_i$.
Then $\cC_{\sigma_i}(\tau_i) \ =\ 0$, by \eqref{i,j nequal},
so
$$
\cC_\sigma(\tau) \ :=\   
\cC_{\sigma_1}(\tau_1)\cdots \cC_{\sigma_{k}}(\tau_{k}) \ =\  0.
$$

\end{proof}

\begin{lemma} \label {prop constit}
For $\sigma, \tau\in 2^\wtk$, $\PSA$ satisfies
\begin{align} 
\cC_{\sigma}(\bfx)\cdot   \cC_{\tau}(\bfx) &= 
\cC_{\sigma}(\bfx) \quad & \text{if }\sigma = \tau \label{C idemp}\\
 \cC_{\sigma}(\bfx)\cdot \cC_{\tau}(\bfx) &= 0 \quad &\text{if }\sigma \neq \tau \label{perp}\\
\summe_{\sigma\in 2^{\wtk}} \cC_{\sigma}(\bfx)& = 1.  \label{partition}
\end{align}
\end{lemma}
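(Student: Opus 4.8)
The plan is to derive each of the three equations directly from the axioms listed in Section~\ref{sec ARI}, treating every constituent $\cC_\sigma(\bfx)$ as the single product term $\cC_{\sigma_1}(x_1)\cdots\cC_{\sigma_k}(x_k)$ and freely using the Commutative and Associative Laws to rearrange factors. Since the axioms hold in $\PSA$, so do their equational consequences, so it suffices to exhibit each equation as such a consequence.

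\emph{Equation \eqref{C idemp}.} When $\sigma = \tau$ the assertion is exactly $\cC_\sigma(\bfx)\cdot\cC_\sigma(\bfx) = \cC_\sigma(\bfx)$, which is the instance of the Idempotent Law $p\cdot p = p$ obtained by taking $p := \cC_\sigma(\bfx)$. No further work is required.

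\emph{Equation \eqref{perp}.} Here I would mimic the proof of Lemma~\ref{constit val}. Since $\sigma\neq\tau$, fix an index $i$ with $\sigma_i\neq\tau_i$; then, by the definition of $\cC_0$ and $\cC_1$, one of the factors $\cC_{\sigma_i}(x_i)$, $\cC_{\tau_i}(x_i)$ is $x_i$ and the other is $x_i'$. Regrouping the product $\cC_\sigma(\bfx)\cdot\cC_\tau(\bfx)$ so that the subproduct $x_i\cdot x_i'$ is exhibited, the Complement Law gives $x_i\cdot x_i' = 0$, and then the $0$ Law $p\cdot 0 = 0$ collapses the whole product to $0$. (Semantically this is just the disjointness of the $\sigma$- and $\tau$-cells of the partition of $U$, but the algebraic derivation stays inside the axiom system.)

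\emph{Equation \eqref{partition}.} I would argue by induction on $k$. For $k=1$ the sum is $\cC_1(x_1)\plus\cC_0(x_1) = x_1\plus x_1' = 1$ by the Complement Law. For the step, write each $\sigma\in 2^{\widetilde{k+1}}$ as a pair $(\sigma',b)$ with $\sigma'\in 2^{\wtk}$ and $b := \sigma(k+1)$, so that $\cC_\sigma(\bfx) = \cC_{\sigma'}(x_1,\ldots,x_k)\cdot\cC_b(x_{k+1})$. Collecting the two choices $b=0,1$ for each $\sigma'$ and applying the Distributive Law factors out $\cC_{\sigma'}(x_1,\ldots,x_k)$ and leaves the factor $x_{k+1}\plus x_{k+1}' = 1$; the $1$ Law together with the induction hypothesis then yields $1$. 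The work is essentially bookkeeping, and I do not expect a genuine obstacle: the only steps needing a little care are the regrouping of factors in \eqref{perp} and the clean separation of the last coordinate in the inductive step of \eqref{partition}, both justified purely by repeated use of the Commutative and Associative Laws, after which each equation reduces to a single application of a named axiom.
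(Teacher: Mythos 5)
Your proposal is correct and follows essentially the same route as the paper: the paper also reduces \eqref{perp} to a single factor pair $\{x_j,x_j'\}$ killed by the Complement and $0$ Laws, and obtains \eqref{partition} by expanding $\prodkt_{j\in\wtk}\big(x_j\plus x_j'\big)=1$ via distributivity, which your induction on $k$ merely makes explicit. The only cosmetic difference is that for \eqref{C idemp} you apply idempotence to the whole constituent at once, whereas the paper applies it factor by factor.
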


\begin{proof}
For \eqref{C idemp} and \eqref{perp}, use the fact that one can derive
\begin{eqnarray*}
 \cC_{\sigma}(\bfx)\cdot \cC_{\tau}(\bfx) 
 &=&\Big( \prodkt_{j \in \wtk} \cC_{\sigma_j}(x_j)\Big) \ \cdot\ 
 \Big(  \prodkt_{j \in \wtk} \cC_{\tau_j}(x_j)\Big)\\
 &=& \prodkt_{j \in \wtk} \Big( \cC_{\sigma_j}(x_j)\cdot  \cC_{\tau_j}(x_j) \Big).
\end{eqnarray*}
If $\sigma \neq \tau$ then for some $j$ one has $\{\sigma_j, \tau_j\} = \{0,1\}$, 
so 
 $\big\{\cC_{\sigma_j}(x_j) ,\cC_{\tau_j}(x_j) \big\} = \{x_j,x_j'\}$, leading to 
$\cC_{\sigma}(\bfx)\cdot \cC_{\tau}(\bfx) = 0$.

If $\sigma = \tau$ then for each $j$ one has 
$$
\cC_{\sigma_j}(x_j)\cdot  \cC_{\tau_j}(x_j)\  
=\  \cC_{\sigma_j}(x_j)\cdot  \cC_{\sigma_j}(x_j)\  =\  \cC_{\sigma_j}(x_j),
 $$
and thus
$$
\cC_{\sigma}(\bfx)\cdot \cC_{\tau}(\bfx)\  
=\  \prodkt_{j \in \wtk} \cC_{\sigma_j}(x_j)\ :=\  \cC_{\sigma}(\bfx).
$$

For \eqref{partition}, use the fact that one has
$$
1 \ =\ \prodkt_{j \in \wtk} \Big( x_j \plus   x_j' \Big)\ 
=\  \prodkt_{j \in \wtk} \Big( \cC_1 (x_j) \plus   \cC_{0} (x_j) \Big) .
$$
Expanding the right side gives the desired expression of $1$ as the 
union of all the $\bfx$-constituents.
\end{proof}

\begin{lemma} \label{term x constit}
Given a term $t(\bfx,\bfy)$ and an $\bfx$-constituent $\cC_\sigma(\bfx)$, 
 $\PSA$ satisfies
$$
t(\bfx,\bfy)\cdot \cC_\sigma(\bfx)\ =\ t(\sigma,\bfy)\cdot \cC_\sigma(\bfx).
$$
\end{lemma}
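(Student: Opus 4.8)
The plan is to argue by structural induction on the term $t$, holding $\sigma$ fixed and writing $c := \cC_\sigma(\bfx)$ throughout. The governing idea is that multiplying by a constituent "freezes" each $x_i$ at the value $\sigma_i$: by the construction of $\cC_\sigma$ together with the idempotent law $x_ix_i = x_i$ and the complement law $x_ix_i' = 0$, one has $x_i\cdot c = c$ when $\sigma_i = 1$ (the factor $\cC_1(x_i)=x_i$ is absorbed) and $x_i\cdot c = 0$ when $\sigma_i = 0$ (the factor $\cC_0(x_i)=x_i'$ gives a vanishing product). In both cases $x_i\cdot c = \sigma_i\cdot c$, which is exactly the desired identity for the base term $t = x_i$. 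This is where the whole point of the lemma is concentrated.

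For the other base cases the identity is immediate, since $t(\bfx,\bfy)$ and $t(\sigma,\bfy)$ are then syntactically identical: this covers the constants $0$ and $1$ and each $\bfy$-variable $y_j$. The inductive steps for union and intersection are routine. For $t = p\plus q$, distributivity gives $(p\plus q)c = pc\plus qc$, and applying the induction hypothesis to $p$ and to $q$ rewrites this as $p(\sigma,\bfy)c\plus q(\sigma,\bfy)c = \bigl(p(\sigma,\bfy)\plus q(\sigma,\bfy)\bigr)c$. For $t = p\cdot q$, I first use idempotence of the constituent, $c = c\cdot c$ (by \eqref{C idemp}), to write $(pq)c = (pc)(qc)$, then apply the induction hypothesis to each factor and regroup.

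The one case needing a genuine idea, and the step I expect to be the main obstacle, is the complement $t = p'$, where the complement cannot simply be commuted past the multiplication by $c$. Writing $p := p(\bfx,\bfy)$ and $\bar p := p(\sigma,\bfy)$, the induction hypothesis is $pc = \bar p c$, and I must deduce $p'c = \bar p{}'c$. The trick is to multiply the hypothesis by each of the two complements: multiplying $pc = \bar p c$ by $p'$ and invoking $pp' = 0$ yields $p'\bar p c = p'pc = 0$, and multiplying instead by $\bar p{}'$ yields $p\bar p{}'c = \bar p\bar p{}'c = 0$. Now expand each side of the goal against a partition: $p'c = p'c(\bar p\plus\bar p{}') = p'\bar p c\plus p'\bar p{}'c = p'\bar p{}'c$, and likewise $\bar p{}'c = \bar p{}'c(p\plus p') = p\bar p{}'c\plus p'\bar p{}'c = p'\bar p{}'c$. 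Hence $p'c = p'\bar p{}'c = \bar p{}'c$, closing the induction.

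I note in passing that the whole lemma also has a transparent pointwise proof that sidesteps this obstacle entirely: fixing an interpretation $I$ into some $\bPS(U)$ and a point $u\in U$, if $u\notin I(c)$ then both sides exclude $u$, while if $u\in I(c)$ then $u\in I(x_i)$ iff $\sigma_i = 1$, and a one-line sub-induction on $t$ shows $u\in I\bigl(t(\bfx,\bfy)\bigr)$ iff $u\in I\bigl(t(\sigma,\bfy)\bigr)$ — with the complement case reduced to negating a biconditional. Either presentation works; I would lead with the equational induction to stay within the paper's equational idiom, flagging the complement step as the crux.
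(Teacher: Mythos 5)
Your proof is correct and follows exactly the route the paper takes, namely structural induction on the term $t(\bfx,\bfy)$ (the paper's own proof is just the one-line citation of that induction); your write-up supplies the details, and your handling of the complement case via $p'\bar{p}c = 0$ and $p\bar{p}'c = 0$ is a valid way to close the only nontrivial step.
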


\begin{proof}
By induction on the term $t(\bfx,\bfy)$.
\end{proof}

\subsection{Reduction Theorem}

Every list of equations can be reduced to a single equation.
\begin{theorem} [{\bf Reduction}]  \label{reduction}
A list of equations $p_1(\bfx) = 0,\ldots,p_n(\bfx)=0$ is equivalent
to the single equation $p_1(\bfx) \plus  \cdots \plus  p_n(\bfx)=0$.
\end{theorem}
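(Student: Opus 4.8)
The plan is to prove the stated equivalence semantically, by unwinding the definition of $\PSA$-equivalence of lists and reducing it to an elementary fact about unions of sets. Recall that a list (conjunction) of formulas is $\PSA$-equivalent to a single formula precisely when $\PSA$ satisfies the corresponding biconditional; so it suffices to fix an arbitrary $\PSA$-interpretation $I$ into some power-set algebra $\bPS(U)$ and show that $I$ makes $p_1(\bfx) = 0 \wedge \cdots \wedge p_n(\bfx) = 0$ true if and only if $I$ makes $p_1(\bfx) \plus \cdots \plus p_n(\bfx) = 0$ true.

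First I would compute the interpretation of the combined term. By repeated application of the inductive clause $I(p \plus q) = I(p) \plus I(q)$ from the definition of an interpretation, one obtains $I(p_1 \plus \cdots \plus p_n) = I(p_1) \plus \cdots \plus I(p_n)$, a finite union of subsets of $U$. Writing $A_i := I(p_i)$, the left-hand conjunction holds under $I$ exactly when $A_i = \O$ for every $i$ (since $I(p_i = 0) = \True$ iff $I(p_i) = \O$), while the right-hand equation holds under $I$ exactly when $A_1 \plus \cdots \plus A_n = \O$.

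The crux is then the elementary set-theoretic fact that a finite union of subsets of $U$ is empty if and only if each of the subsets is empty: the forward direction is immediate, and the reverse holds because each $A_i$ is contained in the union. Applying this with $A_i = I(p_i)$ yields the required biconditional for the fixed $I$; since $I$ was arbitrary, the two lists are $\PSA$-equivalent.

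There is essentially no mathematical obstacle here---the only point requiring care is the reading of the statement, namely that ``a list of equations is equivalent to a single equation'' means that the \emph{conjunction} of the list is $\PSA$-equivalent to that single equation, as fixed by the definition of $\Leftrightarrow_\PSA$. (A purely equational argument is also available, should one prefer to stay on the syntactic side: the forward direction follows from the Union-of-equals rule together with $0 \plus 0 = 0$, and the reverse from the Absorption and $0$ Laws, since $p_1 \plus \cdots \plus p_n = 0$ gives $p_i = p_i \cdot (p_1 \plus \cdots \plus p_n) = p_i \cdot 0 = 0$ for each $i$.)
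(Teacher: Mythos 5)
Your proof is correct. Your main line of argument is semantic: you fix an arbitrary interpretation $I$ into a power-set algebra and reduce the claim to the fact that a finite union of subsets of $U$ is empty iff each subset is empty. The paper instead gives a one-line algebraic argument: the direction $(\Rightarrow)$ is declared clear, and for $(\Leftarrow)$ one multiplies $p_1(\bfx)\plus\cdots\plus p_n(\bfx)=0$ by $p_i(\bfx)$ and applies an absorption law (together with the $0$-law) to get $p_i(\bfx)=0$ --- which is exactly the purely equational alternative you sketch in your closing parenthetical. The two routes buy slightly different things: your semantic version is the most elementary possible verification for $\PSA$ specifically, while the paper's derivation uses only the listed axioms and rules of inference, so it transfers verbatim to any algebra satisfying those laws --- a point of some importance in this context, since the whole program is about algebraic manipulation of equations rather than direct appeal to set membership. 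Either way the result stands, and your reading of ``a list of equations is equivalent to a single equation'' as $\PSA$-equivalence of the conjunction with that equation matches the paper's definition.
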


\begin{proof}
The direction $(\Rightarrow)$ is clear. 
For the direction $(\Leftarrow)$,
multiply $p_1(\bfx) \plus  \cdots \plus  p_n(\bfx)=0$ by any $p_i(\bfx)$ and use
an absorption law.
\end{proof}

\begin{remark} 
Reduction was a key step for Boole because his 
Elimination Theorem only applied to a single equation, not to a
list of equations. He had to use a more complicated expression than
Theorem \ref{reduction}
for his system---he developed several forms for reduction,
the main one being $p_1(\bfx)^2 +  \cdots +  p_n(\bfx)^2=0$
{\rm (see \cite{Boole-1854}, p. 121)}.
\end{remark}

\subsection{Expansion Theorem}

Any term $t(\bfx,\bfy)$ can be expanded as a `linear' combination
 of $\bfx$-constituents, with
coefficients that are terms in the variables $\bfy$.
\begin{theorem} [{\bf Boole's Expansion Theorem}] \label{expansion thm}
Given a term $t(\bfx,\bfy)$,  $\PSA$ satisfies
$$
t(\bfx,\bfy)\ =\ \summe_{\sigma \in 2^{\wtk}} t(\sigma,\bfy)\cdot \cC_\sigma(\bfx).
$$
In particular, 
$$
t(x,\bfy)\ =\  t(1,\bfy) \cup t(0,\bfy).    
$$
\end{theorem}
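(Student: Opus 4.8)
The plan is to derive the expansion directly from the partition of unity and the substitution lemma already in hand, so that no fresh induction on the structure of $t$ is needed. First I would start from the $1$ Law, writing $t(\bfx,\bfy) = t(\bfx,\bfy)\cdot 1$, and then replace the $1$ using the partition identity \eqref{partition} of Lemma \ref{prop constit}, namely $1 = \summe_{\sigma\in 2^{\wtk}}\cC_\sigma(\bfx)$. This gives $t(\bfx,\bfy) = t(\bfx,\bfy)\cdot\summe_{\sigma\in 2^{\wtk}}\cC_\sigma(\bfx)$, which has already converted the problem into a purely distributive manipulation.

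Next I would push the factor $t(\bfx,\bfy)$ inside the union. This uses the distributive law, but in the generalized form $r\cdot\summe_i s_i = \summe_i (r\cdot s_i)$ for a finite union of $2^k$ terms; since only the binary distributive law appears among the axioms, I would note that the generalized version follows from it by a routine induction on the number of summands. After distributing, the right-hand side becomes $\summe_{\sigma\in 2^{\wtk}} t(\bfx,\bfy)\cdot\cC_\sigma(\bfx)$.

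The key step is then to apply Lemma \ref{term x constit} to each summand: for every $\sigma$ it gives $t(\bfx,\bfy)\cdot\cC_\sigma(\bfx) = t(\sigma,\bfy)\cdot\cC_\sigma(\bfx)$. Substituting this into each term of the union yields exactly $t(\bfx,\bfy) = \summe_{\sigma\in 2^{\wtk}} t(\sigma,\bfy)\cdot\cC_\sigma(\bfx)$, as desired. The special single-variable statement is the instance $k=1$: here $2^{\wtk}$ has the two elements sending $1\mapsto 1$ and $1\mapsto 0$, with $\cC_1(x)=x$ and $\cC_0(x)=x'$, so the sum collapses to the two-term union $t(1,\bfy)\cdot x \plus t(0,\bfy)\cdot x'$.

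I expect no serious obstacle, since the substantive content has been isolated into Lemma \ref{term x constit} and the partition identity \eqref{partition}; the only point requiring care is the passage from the binary distributive law to its $2^k$-fold form, which I would dispatch by an explicit induction (or by simply invoking finite distributivity as a derived law). One could alternatively prove the theorem by a direct structural induction on $t$, but routing everything through Lemma \ref{term x constit} avoids re-doing that induction and keeps the argument to a few lines.
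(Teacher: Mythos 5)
Your proposal is correct and is essentially the paper's own proof: the paper likewise combines the partition identity \eqref{partition} from Lemma \ref{prop constit} with Lemma \ref{term x constit}, merely leaving the finite distributivity step implicit where you spell it out. Your rendering of the one-variable case as $t(1,\bfy)\cdot x \plus t(0,\bfy)\cdot x'$ is the correct literal instance of the general formula.
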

\begin{proof}
From the third item of Lemma \ref{prop constit}, and Lemma \ref{term x constit}, 
we have
\begin{eqnarray*}
t(\bfx,\bfy)
&=& \summe_{\sigma \in 2^{\wtk}} t(\bfx,\bfy)\cdot  \cC_\sigma(\bfx)\\
&=& \summe_{\sigma \in 2^{\wtk}} t(\sigma,\bfy)\cdot  \cC_\sigma(\bfx).
\end{eqnarray*}
\end{proof}

A special case that occurs frequently is when one expands about all the
variables in the term---the result, a union of consitituents, is  
called {\em the full expansion} of the term; it is also known as
 {\em the disjunctive normal form} 
of the term.
\begin{corollary} \label{DNF}
Given a term $t(\bfx)$,  $\PSA$ satisfies
\begin{eqnarray*}
  t(\bfx) &=& \summe_{\sigma \in 2^{\wtk}} t(\sigma)\cdot \cC_\sigma(\bfx)
\ = \  \begin{cases}
0&\text{if }\quad \PSA \models t(\bfx)=0\\
\summe_{\substack{\sigma \in 2^{\wtk}\\t(\sigma)\neq 0}}  \cC_\sigma(\bfx) 
& \text{otherwise}.
\end{cases}
\end{eqnarray*}

\end{corollary}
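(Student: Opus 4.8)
The plan is to recognize this as the special case of Boole's Expansion Theorem in which the expansion is carried out over \emph{all} the variables occurring in the term, and then to simplify the coefficients using the fact that a variable-free term must denote either $0$ or $1$.

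First I would obtain the first displayed equality directly from Theorem \ref{expansion thm} by taking the list $\bfy$ to be empty: then $t(\bfx,\bfy)$ is just $t(\bfx)$ and each coefficient $t(\sigma,\bfy)$ is the variable-free term $t(\sigma)$, so that $\PSA$ satisfies
$$
t(\bfx)\ =\ \summe_{\sigma \in 2^{\wtk}} t(\sigma)\cdot \cC_\sigma(\bfx).
$$
Next I would invoke \eqref{t(sig)} of Lemma \ref{constit val}, which guarantees that for each individual $\sigma$ either $\PSA \models t(\sigma)=1$ or $\PSA \models t(\sigma)=0$; in particular the index condition ``$t(\sigma)\neq 0$'' is just shorthand for ``$\PSA \models t(\sigma)=1$''. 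For those $\sigma$ with $t(\sigma)=0$, the $0$ Law $p\cdot 0 = 0$ gives $t(\sigma)\cdot\cC_\sigma(\bfx)=0$, and such summands then drop out of the union by the $0$ Law $p\plus 0 = p$; for those $\sigma$ with $t(\sigma)=1$, the $1$ Law $p\cdot 1 = p$ (with commutativity) gives $t(\sigma)\cdot\cC_\sigma(\bfx)=\cC_\sigma(\bfx)$. Hence the union collapses to
$$
t(\bfx)\ =\ \summe_{\substack{\sigma \in 2^{\wtk}\\ t(\sigma)\neq 0}} \cC_\sigma(\bfx).
$$

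Finally, the displayed case split amounts to identifying exactly when the index set $\{\sigma : t(\sigma)\neq 0\}$ is empty. I would argue that it is empty precisely when $\PSA \models t(\bfx)=0$. In one direction, if every $t(\sigma)=0$ then the union above is an empty union and hence equals $0$. In the other direction, if $\PSA \models t(\bfx)=0$, then for each $\sigma$ I would evaluate $t$ under the interpretation $I_\sigma$ into some $\bPS(U)$ with $U\neq \O$ that sends $x_i$ to $U$ when $\sigma_i=1$ and to $\O$ when $\sigma_i=0$; since $I_\sigma(t(\bfx))=\O$ by hypothesis and $I_\sigma(t(\bfx))$ computes the value of the closed term $t(\sigma)$, which is $\O$ or $U$ by \eqref{t(sig)}, nonemptiness of $U$ forces $t(\sigma)=0$. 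The only point needing a little care is this last equivalence: one must check that substituting the constants $\sigma_i\in\{0,1\}$ for the variables $x_i$ agrees with evaluating under the corresponding $\{\O,U\}$-valued interpretation, which is routine given the clauses $I(0):=\O$ and $I(1):=U$ in the definition of an interpretation. With the empty-union branch yielding $0$ and the nonempty branch yielding the stated union, both cases of the displayed formula are established.
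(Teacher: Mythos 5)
Your proof is correct and follows essentially the same route as the paper's: the first equality is the Expansion Theorem with $\bfy$ empty, and the second comes from \eqref{t(sig)} forcing each coefficient $t(\sigma)$ to be $0$ or $1$. The paper's own proof is just those two sentences; your additional verification that the index set $\{\sigma : t(\sigma)\neq 0\}$ is empty exactly when $\PSA \models t(\bfx)=0$ is a welcome filling-in of a detail the paper leaves implicit.
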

%



%
\begin{proof}
The first equality is from Theorem \ref{expansion thm}. 
For the second, note that, in the case of a full expansion, each coefficient  
$t(\sigma)$ is either 0 or 1, by \eqref{t(sig)}. 
\end{proof}

\subsection{More on Constituents}

\begin{definition}
Given a term $t(\bfx)$,
 the {\bf set of constituents of} $t(\bfx)$
is
$$
\cC\big(t(\bfx)\big)\ :=\ \big\{ \cC_\sigma(\bfx) : t(\sigma) \neq 0 \big\}.
$$
\end{definition}
With this definition we have an easy consequence of Corollary \ref{DNF}.
\begin{corollary}\label{test eq}
$\PSA \models s(\bfx)=t(\bfx)\ $ iff 
$\ \cC\big(s(\bfx)\big) = \cC\big(t(\bfx)\big)$.
\end{corollary}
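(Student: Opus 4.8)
The plan is to view the map $t(\bfx)\mapsto \cC\big(t(\bfx)\big)$ as nothing more than a record of the full (disjunctive normal form) expansion of $t$ furnished by Corollary~\ref{DNF}, and then to show that this record both determines and is determined by the $\PSA$-value of $t$. I would prove the two implications separately, the reverse direction being immediate and the forward direction carrying the real content.

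For $(\Leftarrow)$, Corollary~\ref{DNF} gives $\PSA\models s(\bfx)=\summe_{\cC_\sigma(\bfx)\in \cC(s(\bfx))}\cC_\sigma(\bfx)$, and likewise for $t(\bfx)$. If $\cC\big(s(\bfx)\big)=\cC\big(t(\bfx)\big)$, then the two unions on the right are syntactically the same term, so $\PSA\models s(\bfx)=t(\bfx)$ follows at once.

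For $(\Rightarrow)$ I would argue by contraposition. Suppose $\cC\big(s(\bfx)\big)\neq\cC\big(t(\bfx)\big)$; then, interchanging $s$ and $t$ if necessary, there is a $\sigma\in 2^{\wtk}$ with $\cC_\sigma(\bfx)\in\cC\big(s(\bfx)\big)\setminus\cC\big(t(\bfx)\big)$, i.e.\ $s(\sigma)\neq 0$ while $t(\sigma)=0$. Multiplying each term by $\cC_\sigma(\bfx)$ and applying Lemma~\ref{term x constit} yields $\PSA\models s(\bfx)\cC_\sigma(\bfx)=s(\sigma)\cC_\sigma(\bfx)$ and $\PSA\models t(\bfx)\cC_\sigma(\bfx)=t(\sigma)\cC_\sigma(\bfx)$. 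Since $s(\sigma)\in\{0,1\}$ by \eqref{t(sig)} and $s(\sigma)\neq 0$, the first right-hand side is $\cC_\sigma(\bfx)$, whereas $t(\sigma)=0$ makes the second $0$. Hence if we assumed $\PSA\models s(\bfx)=t(\bfx)$ we would obtain $\PSA\models\cC_\sigma(\bfx)=0$, completing the contrapositive once that is shown impossible. (Equivalently, one may avoid Lemma~\ref{term x constit} and instead expand $s$ and $t$ by Corollary~\ref{DNF} and multiply through using the orthogonality \eqref{perp} and idempotence \eqref{C idemp} of Lemma~\ref{prop constit}, reaching the same point.)

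The step I expect to be the main obstacle is the final claim that no single constituent is $\PSA$-equivalent to $0$, i.e.\ $\PSA\not\models\cC_\sigma(\bfx)=0$. This is exactly where the non-degeneracy built into $\PSA$ (each universe $U\neq\O$, reflected in the axiom $1\neq 0$) must be used. I would exhibit a witnessing interpretation: fix any $\bPS(U)\in\PSA$ and define $I_\sigma(x_j):=U$ when $\sigma_j=1$ and $I_\sigma(x_j):=\O$ when $\sigma_j=0$, so that each factor $\cC_{\sigma_j}(x_j)$ is interpreted as $U$ and therefore $I_\sigma\big(\cC_\sigma(\bfx)\big)=U\neq\O$. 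This is precisely the $0$–$1$ computation $\cC_\sigma(\sigma)=1$ recorded in \eqref{s,t equal}, now read as a genuine interpretation rather than a formal substitution. With $\cC_\sigma(\bfx)$ thus satisfiable, the contradiction is reached and the forward direction closes.
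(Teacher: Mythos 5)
Your proof is correct and follows essentially the route the paper intends: the paper offers no written proof, merely calling the statement ``an easy consequence of Corollary~\ref{DNF},'' and your argument is precisely that consequence spelled out, with the one genuinely necessary detail --- that no constituent $\cC_\sigma(\bfx)$ is $\PSA$-equivalent to $0$, witnessed by the interpretation sending $x_j$ to $U$ or $\O$ according to $\sigma_j$ --- correctly identified and supplied.
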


The next result makes the expressive power of an equation $p(\bfx)=0$
 clear---all
it says is that the constituents of $p(\bfx)$ are empty.
\begin{corollary}\label{expr power}
An equation $t(\bfx)=0$ is $\PSA$-equivalent to the (conjunction of the) set of equations
$$
\{0=0\}\ \cup\ \big\{\cC_\sigma(\bfx) = 0 : 
\cC_\sigma(\bfx) \in \cC\big(t(\bfx)\big)\big\}.
$$
\end{corollary}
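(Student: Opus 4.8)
The plan is to read everything off the disjunctive normal form supplied by Corollary~\ref{DNF}. That corollary already writes $t(\bfx)$ as the union of precisely the constituents belonging to $\cC\big(t(\bfx)\big)$, so the claimed equivalence will come down to the elementary Boolean fact that a union of constituents is empty exactly when every one of its summands is empty.

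For the direction $(\Leftarrow)$ I would assume that $\PSA$ satisfies $\cC_\sigma(\bfx)=0$ for every $\cC_\sigma(\bfx)\in\cC\big(t(\bfx)\big)$, substitute these zeros into the expansion
$$
t(\bfx)\ =\ \summe_{\cC_\sigma(\bfx)\in\cC(t(\bfx))}\cC_\sigma(\bfx)
$$
furnished by Corollary~\ref{DNF}, and conclude $t(\bfx)=0$ by the $0$ Laws. For the direction $(\Rightarrow)$ I would assume $\PSA\models t(\bfx)=0$, fix an arbitrary $\cC_\sigma(\bfx)\in\cC\big(t(\bfx)\big)$ (so that $t(\sigma)\neq 0$), multiply the hypothesis through by $\cC_\sigma(\bfx)$, and invoke Lemma~\ref{term x constit} to compute
$$
0\ =\ t(\bfx)\cdot\cC_\sigma(\bfx)\ =\ t(\sigma)\cdot\cC_\sigma(\bfx)\ =\ \cC_\sigma(\bfx),
$$
the final equality holding because $t(\sigma)=1$. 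This forces each listed constituent to be empty, as required.

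The only point that needs care---hardly an obstacle---is the interplay between the constant $t(\sigma)$ and the membership condition defining $\cC\big(t(\bfx)\big)$: by \eqref{t(sig)} the value $t(\sigma)$ is always $0$ or $1$, so the condition $t(\sigma)\neq 0$ in the definition of $\cC\big(t(\bfx)\big)$ is precisely the condition $t(\sigma)=1$, which is exactly what legitimizes the simplification $t(\sigma)\cdot\cC_\sigma(\bfx)=\cC_\sigma(\bfx)$. I would also flag the degenerate case $\cC\big(t(\bfx)\big)=\O$, in which $\PSA\models t(\bfx)=0$ holds identically and the listed set of equations collapses to $\{0=0\}$; this is exactly why the clause $\{0=0\}$ is adjoined, so that the conjunction ranges over a nonempty set and the equivalence is immediate.
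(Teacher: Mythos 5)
Your proposal is correct and follows the route the paper intends: the paper states Corollary~\ref{expr power} without proof as an easy consequence of the full expansion in Corollary~\ref{DNF}, and your argument fills in exactly that reasoning (union of the listed constituents is empty iff each is empty, with the $(\Rightarrow)$ direction obtained by multiplying through by $\cC_\sigma(\bfx)$ and applying Lemma~\ref{term x constit}, and the $\{0=0\}$ clause covering the case $\cC\big(t(\bfx)\big)=\O$). The only cosmetic point is that the equivalence is a per-interpretation statement, so phrases like ``assume $\PSA\models t(\bfx)=0$'' should be read as ``assume $I\big(t(\bfx)\big)=\O$ for a given interpretation $I$''; your actual computations are equational derivations from the hypotheses and hence valid under every interpretation, which is what is needed.
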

The adjunction of $\{0=0\}$ is needed for the case
 $\PSA \models t=0$.

The next lemma gives a simple calculus for working with constituents of terms.

\begin{lemma} \label{constit prop}
Let $\cC(\bfx)$ be the set of $\bfx$-constituents. 
\begin{thlist}
\item
$\cC(0) = \O$ and $\cC(1) = \cC\bfx$.
\item
$\cC\big(t_1 \plus   \cdots \plus   t_{n}\big)\ =\ \cC\big(t_1\big) \plus  
\cdots \plus  \cC\big(t_{n}\big)$
\item
$\cC\big(t_1  \cdots t_{n}\big)\ =\ \cC\big(t_1\big)  \cdots  \cC\big(t_{n}\big)$
\item
$\cC(t') \ =\ \cC(t)'$.
\end{thlist}
\end{lemma}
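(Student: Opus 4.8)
The plan is to reduce all four identities to a single bookkeeping statement about the \emph{index sets}
$$
S_t\ :=\ \{\sigma \in 2^{\wtk} : t(\sigma) \neq 0\},
$$
and then transport that statement back to sets of constituents. By \eqref{t(sig)} each coefficient $t(\sigma)$ equals $0$ or $1$, so ``$t(\sigma)\neq 0$'' is the same as ``$t(\sigma)=1$'', and the definition of $\cC(t(\bfx))$ reads $\cC\big(t(\bfx)\big) = \{\cC_\sigma(\bfx) : \sigma \in S_t\}$. By the definition of the $\bfx$-constituents, the assignment $\sigma \mapsto \cC_\sigma(\bfx)$ is a bijection from $2^{\wtk}$ onto the full set $\cC\bfx$, and under it $\cC(t)$ is exactly the image of $S_t$. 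Hence union, intersection, and (relative to $\cC\bfx$) complement of sets of constituents correspond exactly to the same operations performed on the index sets inside $2^{\wtk}$, and it suffices to prove the four identities at the level of the $S_t$.

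First I would record the algebraic fact that makes the index-set identities immediate: for a fixed $\sigma$, evaluation $t \mapsto t(\sigma)$ is a homomorphism onto the two-element subalgebra $\{0,1\}$. Indeed $0$ and $1$ are closed under the operations (the complement laws give $0'=1$ and $1'=0$, while the $0$-, $1$-, and idempotent laws fix the values of $\cup$ and $\cap$ on $\{0,1\}$), and substituting the $0/1$ values $\sigma$ for the variables and applying the interpretation rules yields
$$
(t')(\sigma)=t(\sigma)',\qquad (t_1\cup t_2)(\sigma)=t_1(\sigma)\cup t_2(\sigma),\qquad (t_1 t_2)(\sigma)=t_1(\sigma)\,t_2(\sigma),
$$
each computed in $\{0,1\}$. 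This is the same induction on terms already used for \eqref{t(sig)} and Lemma \ref{term x constit}.

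With this in hand the four parts fall out. For (a), $0(\sigma)=0$ and $1(\sigma)=1$ for every $\sigma$, so $S_0=\O$ and $S_1=2^{\wtk}$, giving $\cC(0)=\O$ and $\cC(1)=\cC\bfx$. For (b), $\sigma\in S_{t_1\cup\cdots\cup t_n}$ iff $t_1(\sigma)\cup\cdots\cup t_n(\sigma)=1$, and a union in $\{0,1\}$ is $1$ iff some summand is $1$, i.e.\ iff $\sigma\in S_{t_i}$ for some $i$; hence $S_{t_1\cup\cdots\cup t_n}=\bigcup_i S_{t_i}$. Part (c) is dual: a product in $\{0,1\}$ is $1$ iff every factor is $1$, so $S_{t_1\cdots t_n}=\bigcap_i S_{t_i}$. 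For (d), $\sigma\in S_{t'}$ iff $t(\sigma)'=1$ iff $t(\sigma)=0$ iff $\sigma\notin S_t$, so $S_{t'}=2^{\wtk}\smallsetminus S_t$.

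I do not expect any real obstacle, since the argument is essentially a change of viewpoint. The one point to state carefully is that the complement in (d) is taken relative to the full constituent set $\cC\bfx$ (equivalently, $S_{t'}$ is the complement of $S_t$ inside all of $2^{\wtk}$), and that the passage from ``$t(\sigma)\neq 0$'' to ``$t(\sigma)=1$'' genuinely invokes \eqref{t(sig)}, so that evaluation does land in the two-element algebra.
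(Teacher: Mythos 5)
Your proof is correct, and since the paper itself disposes of this lemma with the single word ``(Routine.)'', your argument is exactly the kind of filling-in the authors had in mind: reduce everything via \eqref{t(sig)} to the index sets $S_t \subseteq 2^{\wtk}$, observe that evaluation at $\sigma$ is a homomorphism onto $\{0,1\}$, and transport union, intersection, and complement (relative to $\cC\bfx$) through the bijection $\sigma \mapsto \cC_\sigma(\bfx)$. You also correctly flag the two points worth being careful about, namely that ``$t(\sigma)\neq 0$'' means ``$t(\sigma)=1$'' only because of \eqref{t(sig)}, and that the complement in (d) is taken inside $\cC\bfx$.
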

\begin{proof} (Routine.) \end{proof}

\section{Valid Basic-Formula Arguments}

The goal of finding general conditions under which arguments
$\varphi_1,\ldots,\varphi_n\ \therefore\ \varphi$ are valid does not seem 
to have been part of the algebra or logic of the 1800s. 
One was not so much interested in devising a test to see if one had 
found a consequence of a set of premisses; rather the focus was on forging 
methods for actually finding consequences from the premisses, preferably
 the most general consequences.
The topics that interested Boole, and later Schr\"oder, in the algebra of logic 
were {\em elimination} and {\em solution}---we will discuss those later, 
in $\S$\ref{sec E and S}.

\begin{definition}
Given a term $p(\bfx)$, let $C_1(\bfx),\ldots,C_m(\bfx)$ \rm{(}where $m = 2^k$\rm{)} be a listing of the        
$\bfx$-constituents.
Define the universe $U_p$ and an interpretation $I_p$ into $\bPS(U_p)$ by:
\begin{eqnarray*} 
U_p &:=& \big\{ i \in \{1, \dots , m\} : C_i(\bfx) \notin \cC(p) \big\}\\                
I_p(x_\ell)&:=& A_\ell \ :=\  \big\{i \in U_p : C_i(\bfx) \in \cC(x_\ell)\big\}, \text{ for }  \ell=1, \dots, k.
\end{eqnarray*}
Note that $C_i(\bfx) \in \cC(x_\ell)$ means that $x_\ell$, and not $x_\ell'$,
 appears in $C_i(\bfx)$.
\end{definition}

  \begin{example}
  Let  $\bf(x):=x_1, x_2$ and $p(x_1,x_2)$ be $x_1x_2 \plus   x_1'  x_2'$.
  List the four $\bfx$-constituents:
  $$
  \begin{array}{r c l @{\qquad} r c l}
  C_1(x_1,x_2) &:=& x_1  x_2 &
  C_2(x_1,x_2) &:=& x_1  x_2'\\
 C_3(x_1,x_2) &:=& x_1'  x_2&
  C_4(x_1,x_2) &:=& x_1'  x_2'.
  \end{array}
  $$
  Then  $\cC(x_1) = \{ \cC_1, \cC_2\}$, $\cC(x_1) = \{ \cC_1, \cC_3\}$, and
  $\cC\big(p(x_1,x_2)\big) = \big\{ \cC_1, \cC_4\}$, so we have $U_p := \{2,3\}$, 
   $I_p(x_1) = \{2\}$, $I_p(x_2) = \{ 3 \}$, $I_p(\cC_1(\bfx)) =I_p(x_1) \cap I_p(x_2)=\{2\} \cap \{3\}=\O$,
 $I_p(\cC_2(\bfx)) = \{2\}$, $I_p(\cC_3(\bfx)) = \{3\}$, $I_p(\cC_4(\bfx)) = \O$.  Observe that if $(\cC_i(\bfx)) \in \cC\big(p(\bfx)\big)$, then $I_p(\cC_i(\bfx)) = \O$, and if $(\cC_i(\bfx)) \notin \cC\big(p(\bfx)\big)$, then $I_p(\cC_i(\bfx)) = \{i\}$.
 
  \end{example}
  
The following lemma generalizes the above example.

\begin{lemma}
$I_p$ interprets the $C_i(\bfx)$ in $U_p$ as follows:
\begin{equation} \label{Ip}
I_p\big(C_i(\bfx)\big)\ :=\ C_i(\bA) \ = \ \begin{cases}
\O&\text{if }\ C_i(\bfx) \in \cC(p)\\
\{i\}&\text{if }\ C_i(\bfx) \notin \cC(p).
\end{cases}
\end{equation}
Thus $I_p\big(C_i(\bfx)\big) \neq \O$ iff $C_i(\bfx)$ is not a constituent
of $p(\bfx)$.
\end{lemma}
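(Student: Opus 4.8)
The plan is to compute $I_p\big(C_i(\bfx)\big)$ directly from the definitions by tracking which indices $i'$ survive the intersection that defines the interpreted constituent. First I would fix the listing and write each constituent as $C_i(\bfx) = \cC_{\sigma^{(i)}}(\bfx)$ for a unique $\sigma^{(i)} \in 2^{\wtk}$, so that distinct indices correspond to distinct sign-patterns.

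The first key step is to unwind the definition of the sets $A_\ell = I_p(x_\ell)$. Since $C_{i'}(\bfx) \in \cC(x_\ell)$ means precisely that $x_\ell$, and not $x_\ell'$, occurs in $C_{i'}(\bfx)$---equivalently $\sigma^{(i')}_\ell = 1$---I would record that
$$
A_\ell = \{i' \in U_p : \sigma^{(i')}_\ell = 1\}, \qquad U_p \smallsetminus A_\ell = \{i' \in U_p : \sigma^{(i')}_\ell = 0\}.
$$

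The second step is to expand the interpretation of the constituent as an intersection over coordinates,
$$
I_p\big(C_i(\bfx)\big) = \prodkt_{j \in \wtk} I_p\big(\cC_{\sigma^{(i)}_j}(x_j)\big),
$$
where $I_p\big(\cC_1(x_j)\big) = A_j$ and $I_p\big(\cC_0(x_j)\big) = U_p \smallsetminus A_j$. Combining with the previous step, an index $i' \in U_p$ lies in $I_p\big(C_i(\bfx)\big)$ iff $\sigma^{(i')}_j = \sigma^{(i)}_j$ for every $j \in \wtk$, that is, iff $\sigma^{(i')} = \sigma^{(i)}$. Because the listing assigns distinct sign-patterns to distinct indices, this forces $i' = i$.

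The final step is bookkeeping: the computation shows $I_p\big(C_i(\bfx)\big) = \{i\} \cap U_p$, which is $\{i\}$ when $i \in U_p$ and $\O$ otherwise. By definition of $U_p$, one has $i \in U_p$ exactly when $C_i(\bfx) \notin \cC(p)$, so this is precisely the claimed dichotomy, and the appended remark $I_p\big(C_i(\bfx)\big) \neq \O$ iff $C_i(\bfx) \notin \cC(p)$ is then immediate. I expect no serious obstacle here; the only point requiring care is not to conflate the two roles played by the index $i$, since $i$ simultaneously names a constituent (through $\sigma^{(i)}$) and a candidate element of the universe $U_p$. Keeping these straight is what makes the ``only $i' = i$ survives'' step clean.
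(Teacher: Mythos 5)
Your proposal is correct and follows essentially the same route as the paper: unwind the definition of $A_\ell$, expand $I_p\big(C_i(\bfx)\big)$ as an intersection over coordinates, observe that an index $j\in U_p$ survives iff its sign-pattern agrees with that of $C_i(\bfx)$ in every coordinate (hence iff $j=i$), and finish by noting $i\in U_p$ iff $C_i(\bfx)\notin\cC(p)$. The only difference is cosmetic---you phrase coordinate agreement via the sign-patterns $\sigma^{(i)}$ where the paper phrases it via membership in $\cC(x_\ell)$.
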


\begin{proof}
We have
\begin{eqnarray*}
j \in I_p\big(C_i(\bfx)\big)
&\Leftrightarrow& 
\bigwedge_{\ell \in \wtk}\Big[ C_i(\bfx) \in \cC(x_\ell) \Leftrightarrow j \in A_\ell\Big]\\
&\Leftrightarrow& 
\bigwedge_{\ell \in \wtk}\Big[ C_i(\bfx) \in \cC(x_\ell) 
\Leftrightarrow \big( j \in U_p \ \wedge\ C_j(\bfx)\in \cC(x_\ell)\big)\Big]\\
&\Leftrightarrow& 
\big( j \in U_p\big)\ \wedge\ 
\bigwedge_{\ell \in \wtk}\Big[ C_i(\bfx) \in \cC(x_\ell) 
\Leftrightarrow  C_j(\bfx)\in \cC(x_\ell)\Big]\\
&\Leftrightarrow& 
\big( j \in U_p\big)\ \wedge\ 
\big(C_i(\bfx) = C_j(\bfx)\big)\\
&\Leftrightarrow& 
\big( j \in U_p\big)\ \wedge\ 
\big(i=j\big)\\
&\Leftrightarrow& 
\Big( C_i(\bfx) \notin \cC\big(p(\bfx)\big)\Big)\ \wedge\ 
\big(j=i\big)
\end{eqnarray*}

\end{proof}

%
  %
\begin{lemma} \label{using Ip}
Given terms $p(\bfx)$ and $q(\bfx)$, one has
\begin{thlist}
\item
$I_p\big(\cC_\sigma(\bfx)\big) = \O\ $ iff $\ \cC_\sigma(\bfx)\in\cC\big(p(\bfx)\big)$, 
for $\sigma\in 2^\wtk$.
\item
$\cC\big(q(\bfx)\big) \subseteq \cC\big(p(\bfx)\big)$
iff $I_p(q) = \O$.
\item
The equational argument 
	$$p(\bfx)=0 \ \therefore\ q(\bfx)=0$$
 is valid iff
	$$\cC\big(q(\bfx)\big) \subseteq \cC\big(p(\bfx)\big).$$
\end{thlist}
\end{lemma}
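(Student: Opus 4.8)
The plan is to prove the three items in sequence, letting Boole's Expansion Theorem in its normal-form guise (Corollary~\ref{DNF}) and the explicit action of $I_p$ on constituents recorded in \eqref{Ip} do all the work. Item (a) is a mere relabeling of \eqref{Ip}: the terms $C_1(\bfx),\ldots,C_m(\bfx)$ (with $m=2^k$) simply enumerate the $\bfx$-constituents $\cC_\sigma(\bfx)$, and \eqref{Ip} already records that $I_p$ sends a constituent to $\O$ exactly when it belongs to $\cC(p)$ and to a nonempty singleton otherwise; rewriting the index $i$ as the corresponding $\sigma\in 2^\wtk$ gives (a).

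For item (b) I would expand $q$ into its constituents. By Corollary~\ref{DNF} one has $\PSA\models q(\bfx)=\summe_{\cC_\sigma(\bfx)\in\cC(q)}\cC_\sigma(\bfx)$, and since any interpretation commutes with $\cup$, $I_p(q)=\summe_{\cC_\sigma(\bfx)\in\cC(q)}I_p\big(\cC_\sigma(\bfx)\big)$. A union of sets is empty iff each of its members is empty, so $I_p(q)=\O$ holds iff $I_p(\cC_\sigma(\bfx))=\O$ for every $\cC_\sigma(\bfx)\in\cC(q)$; by (a) this is exactly the statement that each constituent of $q$ is a constituent of $p$, i.e.\ $\cC(q)\subseteq\cC(p)$.

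Item (c) combines these two facts. The direction $(\Leftarrow)$ is purely semantic: if $\cC(q)\subseteq\cC(p)$, the constituent-union representing $q$ is a sub-union of the one representing $p$, so $\PSA\models q\subseteq p$, and therefore every $\PSA$-interpretation $I$ with $I(p)=\O$ forces $I(q)\subseteq I(p)=\O$, making the argument valid. For $(\Rightarrow)$ I would argue contrapositively. Assume $\cC(q)\not\subseteq\cC(p)$; then $\cC(p)\neq\cC(\bfx)$, so $U_p\neq\O$ and $I_p$ is a bona fide $\PSA$-interpretation. Since every constituent of $p$ vanishes under $I_p$ we get $I_p(p)=\O$, while item (b) gives $I_p(q)\neq\O$. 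Hence $I_p$ satisfies the premiss $p=0$ but falsifies the conclusion $q=0$, so the argument is not valid.

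The one point I would watch carefully---the main obstacle---is that $I_p$ lies in $\PSA$ only when $U_p\neq\O$, since $\PSA$ excludes the empty universe. Framing the hard direction of (c) contrapositively neatly avoids this: $\cC(q)\not\subseteq\cC(p)$ forces $\cC(p)$ to omit a constituent, hence $U_p\neq\O$. In the excluded case $\PSA\models p=1$, the premiss $p=0$ is unsatisfiable by the non-empty-universe axiom, so the argument is vacuously valid while $\cC(q)\subseteq\cC(p)=\cC(\bfx)$ holds automatically; both sides of (c) are then true, consistent with the equivalence.
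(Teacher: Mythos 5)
Your proof is correct and follows essentially the same route as the paper: (a) is read off from \eqref{Ip}, (b) combines (a) with the full expansion of $q$ into constituents, and (c) uses the interpretation $I_p$ as the witness for the hard direction (you phrase it contrapositively, the paper argues directly, but it is the same argument). Your explicit handling of the degenerate case $\cC(p)=\cC(\bfx)$, where $U_p=\O$ and $I_p$ fails to be a $\PSA$-interpretation, is a point the paper's proof passes over in silence, and it is worth having on record.
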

\begin{proof}
(a) follows from \eqref{Ip}, and (b) from (a) and Corollary 3.9. 
The direction $(\Leftarrow)$ of (c) follows from Corollary \ref{expr power},
For the direction $(\Rightarrow)$ of (c), assume 
$\Valid\big[p(\bfx)=0 \ \therefore\ q(\bfx)=0\big]$. By Lemma \ref{using Ip}(b),
 $I_p(p)=0$, thus we must have $I_p(q)=0$. This gives 
 $\cC\big(q(\bfx)\big) \subseteq \cC\big(p(\bfx)\big)$, again by 
 Lemma \ref{using Ip}(b).
\end{proof}

\subsection{Equational Arguments}
The next result says that an equational
argument is valid  iff the constituents of the conclusion are
among the constituents of the premisses.
\begin{theorem} [{\bf Equational Arguments}] \label{constit and validity}
The equational argument
$$
p_1(\bfx)=0,\ \ldots,\  p_m(\bfx)=0\ \therefore\ p(\bfx)=0
$$
is valid  iff
$$
\cC\big(p(\bfx)\big)\ \subseteq\ 
\cC(p_1\big(\bfx)\big) \plus  \cdots \plus  \cC\big(p_m(\bfx)\big).
$$
\end{theorem}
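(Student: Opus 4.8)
The plan is to reduce the multi-premiss case to the single-premiss case already settled in Lemma \ref{using Ip}(c), and then translate the resulting constituent-containment through the calculus of Lemma \ref{constit prop}. The key observation is that each ingredient is already in place: Reduction collapses the list of premisses into one equation, the single-equation validity criterion is Lemma \ref{using Ip}(c), and the behaviour of $\cC$ under union is Lemma \ref{constit prop}(b).

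First I would apply the Reduction Theorem (Theorem \ref{reduction}) to see that the premiss list $p_1(\bfx)=0,\ldots,p_m(\bfx)=0$ is $\PSA$-equivalent to the single equation $p_1(\bfx)\plus\cdots\plus p_m(\bfx)=0$. Since semantically equivalent premiss sets yield the same consequences, the argument
$$
p_1(\bfx)=0,\ \ldots,\ p_m(\bfx)=0\ \therefore\ p(\bfx)=0
$$
is valid if and only if the single-premiss argument
$$
p_1(\bfx)\plus\cdots\plus p_m(\bfx)=0\ \therefore\ p(\bfx)=0
$$
is valid.

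Next I would invoke Lemma \ref{using Ip}(c), the single-equation validity criterion, with the premiss term $p_1(\bfx)\plus\cdots\plus p_m(\bfx)$ in place of its $p(\bfx)$ and the conclusion term $p(\bfx)$ in place of its $q(\bfx)$. This says the single-premiss argument is valid if and only if
$$
\cC\big(p(\bfx)\big)\ \subseteq\ \cC\big(p_1(\bfx)\plus\cdots\plus p_m(\bfx)\big).
$$
Finally, Lemma \ref{constit prop}(b) rewrites the right-hand side as $\cC(p_1)\plus\cdots\plus\cC(p_m)$, which delivers exactly the stated containment and completes the chain of equivalences.

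There is no serious obstacle here; the theorem is essentially a bookkeeping combination of three earlier results. The one point that deserves a word of care is the transfer of validity across the Reduction step: one must note explicitly that replacing a premiss set by a $\PSA$-equivalent one preserves the validity of the argument, since $\Valid_\PSA$ was defined through satisfaction of the implication $\bigwedge$(premisses)$\to$(conclusion), and equivalent premiss conjunctions give the same implication up to $\PSA$-equivalence. Once that is granted, the remaining steps are direct citations.
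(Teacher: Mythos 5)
Your proposal is correct and follows exactly the paper's own route: reduce the premiss list to a single equation via Theorem \ref{reduction}, apply Lemma \ref{using Ip}(c), and rewrite the constituent set of the union via Lemma \ref{constit prop}(b). The paper's proof is just a one-sentence citation of these same three ingredients, so your more explicit write-up (including the remark about validity being preserved under $\PSA$-equivalent premiss sets) is simply a fuller account of the identical argument.
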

\begin{proof}
This follows from Lemma \ref{constit prop}(b) and Lemma \ref{using Ip}(c),
since the premisses can be reduced to the single equation
$p_1(\bfx) \plus  \cdots\plus   p_m(\bfx)=0$, by Theorem \ref{reduction}.
\end{proof}

\subsection{Equational Conclusion}
Some basic-formula arguments are rather trivially valid
 because it is not 
possible to make all the premisses true, under any interpretation.
A simple example would be $x=0, x\neq 0 \ \therefore\ \beta$.
This argument is  valid, but not very interesting. 
The next theorem says that positive conclusions only require
positive premisses, provided the premisses are satisfiable.
\begin{theorem} [{\bf Equational Conclusion}]  \label{eq concl}
Suppose the list 
\begin{equation} \label{nonTriv}
	p_1(\bfx)=0,\ \ldots,\ p_m(\bfx)=0,\ q_1(\bfx)\neq 0,\ \ldots,\ q_n(\bfx) \neq 0
\end{equation}
of basic formulas is satisfiable. 
Then the basic-formula argument
\begin{equation} \label{PNP}
	p_1(\bfx)=0,\ \ldots,\ p_m(\bfx)=0,\ q_1(\bfx)\neq 0,\ \ldots,\ q_n(\bfx) \neq 0
\ \therefore\ p(\bfx)=0
\end{equation}
is  valid iff the equational argument
\begin{equation}\label{PP}
	p_1(\bfx)=0,\ \ldots,\ p_m(\bfx)=0\ \therefore\ p(\bfx)=0
\end{equation}
is  valid.
\end{theorem}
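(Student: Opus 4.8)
The plan is to treat the two directions of the biconditional separately; the real content is the implication from validity of the mixed argument \eqref{PNP} to validity of the purely equational argument \eqref{PP}. The reverse implication is immediate from monotonicity of entailment and does not even use satisfiability: any interpretation satisfying all premisses of \eqref{PNP} satisfies, in particular, the positive premisses $p_i(\bfx)=0$, and hence satisfies the conclusion $p(\bfx)=0$ whenever \eqref{PP} is valid.

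So assume \eqref{PNP} is valid; the goal is to show \eqref{PP} is valid. First I would use the satisfiability of \eqref{nonTriv} to fix an interpretation $J$ into some $\bPS(V)$ with $J(p_i)=\O$ for all $i$ and $J(q_j)\neq\O$ for all $j$. To establish validity of \eqref{PP}, let $I$ be an arbitrary interpretation, into some $\bPS(U)$, satisfying the positive premisses, i.e.\ $I(p_i)=\O$ for all $i$; the goal is then $I(p)=\O$. After renaming the underlying set so that $U\cap V=\O$, I would form the ``disjoint-union'' interpretation $K$ into $\bPS(U\cup V)$ given by $K(x):=I(x)\cup J(x)$ for every variable $x$.

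The key step, which I expect to be the main obstacle, is to show that $K$ evaluates terms componentwise: $K(t)=I(t)\cup J(t)$ for every term $t(\bfx)$. This is a routine induction on $t$, the only subtle case being complement, where disjointness of $U$ and $V$ gives $K(t')=(U\cup V)\smallsetminus\big(I(t)\cup J(t)\big)=\big(U\smallsetminus I(t)\big)\cup\big(V\smallsetminus J(t)\big)=I(t')\cup J(t')$. Granting this, $K$ satisfies every premiss of \eqref{PNP}: indeed $K(p_i)=I(p_i)\cup J(p_i)=\O$, while $K(q_j)=I(q_j)\cup J(q_j)\supseteq J(q_j)\neq\O$ --- and it is precisely here that the satisfiability hypothesis is used, since $J$ supplies the nonemptiness of each $q_j$ independently of $I$. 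By validity of \eqref{PNP} it follows that $K(p)=\O$; since $K(p)=I(p)\cup J(p)\supseteq I(p)$, this forces $I(p)=\O$. As $I$ was arbitrary, \eqref{PP} is valid, completing the proof.
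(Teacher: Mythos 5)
Your proof is correct, but it takes a genuinely different route from the paper's. The paper first reduces the equational premisses to a single equation $p_0(\bfx)=0$ with $p_0 := p_1\plus\cdots\plus p_m$ and then works with the canonical constituent-based interpretation $I_{p_0}$ (the one that sends a constituent to $\O$ exactly when it lies in $\cC(p_0)$): satisfiability of \eqref{nonTriv} forces each $\cC(q_j)$ to contain a constituent outside $\cC(p_0)$, so $I_{p_0}$ satisfies all premisses of \eqref{PNP}, whence $I_{p_0}(p)=\O$, which translates into $\cC(p)\subseteq\cC(p_0)$ and hence validity of \eqref{PP} by the earlier constituent criteria. You instead avoid the constituent machinery entirely: given an arbitrary model $I$ of the equational premisses and a fixed model $J$ of all of \eqref{nonTriv}, you pass to the disjoint-union interpretation $K$ into $\bPS(U\cup V)$, verify componentwise evaluation of terms (equivalently, $\bPS(U\cup V)\cong\bPS(U)\times\bPS(V)$ for disjoint $U,V$, with $K$ corresponding to the pair $(I,J)$), and observe that equations are reflected from $K$ back to $I$ while negated equations are inherited from $J$. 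Both arguments are sound; yours is more elementary and more portable, being really a preservation-under-products argument that would work in any setting where such a product is available, while the paper's buys an explicit, effectively checkable criterion ($\cC(p)\subseteq\cC(p_0)$) that it reuses in the subsequent theorems on negated-equation conclusions. One small point worth making explicit in your write-up: the componentwise evaluation for intersection also needs the disjointness of $U$ and $V$ (to kill the cross terms $I(s)\cap J(t)$), not just the complement case; and $U\cup V\neq\O$ holds since $V\neq\O$, so $\bPS(U\cup V)$ is indeed a member of $\PSA$.
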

\begin{proof}

The direction $\eqref{PP} \Rightarrow \eqref{PNP}$ is trivial. 
So suppose \eqref{PNP} is  valid.
First let us replace the equational premisses in \eqref{PNP} with a single equation
$p_0(\bfx) = 0$, giving the argument
\begin{equation} \label{P0NP}
	p_0(\bfx)=0,\ q_1(\bfx)\neq 0,\ \ldots,\ q_n(\bfx)\neq 0
\ \therefore\ p(\bfx)=0,
\end{equation}
 where $p_0(\bfx) := p_1(\bfx)\plus  \cdots\plus  p_m(\bfx)$. 
 The arguments
 \eqref{PNP} and \eqref{P0NP} are both  valid or both  invalid.
 
By Corollary \ref{test eq} and Lemma \ref{using Ip}(a), 
the interpretation $I_{p_0}$ makes $p_0(\bfx)=0$ true.
From the satisfiability of \eqref{nonTriv},
 it follows that
$$
	p_0(\bfx)=0,\ q_1(\bfx)\neq 0,\ \ldots,\ q_n(\bfx)\neq 0
$$
is satisfiable.  Then by Lemma \ref{using Ip}(b), each $\cC(q_j)$
has a constituent that is not in $\cC(p_0)$. 
Thus $I_{p_0}$ makes some constituent in each $\cC(q_j)$ 
non-empty, and thus it makes each $q_j(\bfx)\neq 0$ true. 
From this it follows that the interpretation $I_{p_0}$ makes all the 
premisses of \eqref{PNP} true. 
Since we have assumed \eqref{PNP} is a valid argument, it follows
that $I_{p_0}$ makes $p(\bfx)=0$ true, thus $I_{p_0}\big(p(\bfx)\big) = \O$.
By Lemma \ref{using Ip} (b), it follows that $\cC(p) \subseteq \cC(p_0)$; 
consequently, by Lemma \ref{using Ip}(c), 
the argument $p_0(\bfx)=0 \therefore p(\bfx)=0$ is valid. 
Thus \eqref{PP} is valid.

\end{proof}

\subsection{Negated-Equation Conclusion}
Now we turn to the case when the conclusion is a negated equation. 
Perhaps surprisingly, such an argument reduces in a simple manner 
to a disjunction of equational arguments.
We assume the equational premisses have already been reduced to a single 
equation $p_0(\bfx)=0$.

\begin{theorem} \label{NegRed}
Consider the following assertions:
\begin{align}
&\Valid \big( p_0(\bfx) = 0,\  q_1(\bfx)\neq 0,\ \ldots,\  q_n(\bfx) \neq 0 
 \ \therefore\ q(\bfx) \neq 0 \big)
 \label{PNN1}\\
&\Valid\big( p_0(\bfx) = 0,\ q_j(\bfx) \neq 0 \ \therefore\ q(\bfx) \neq 0  \big)
 \label{PNN2}\\
&\Valid\big( p_0(\bfx) = 0,\ q(\bfx)= 0\ \therefore\ q_j(\bfx) = 0 \big)
 \label{PNN3}\\
& \cC\big(q_j(\bfx)\big)\ \subseteq\ \cC\big(p_0(\bfx)\big)\ \plus
\ \cC\big( q(\bfx)\big). \label{PNN4}
\end{align}

Then 
\begin{thlist}
\item
For each $j$, 
\eqref{PNN2} holds iff  \eqref{PNN3} holds.

\item
For each $j$, 
\eqref{PNN3} holds iff \eqref{PNN4} holds.


\item
\eqref{PNN1} holds iff for some $j$, \eqref{PNN2} holds.

\end{thlist}
\end{theorem}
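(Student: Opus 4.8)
The plan is to dispatch (b) as a direct instance of the Equational Arguments theorem, to observe that (a) is a pure contraposition, and then to derive (c) from (a) and (b) by exhibiting a single well-chosen interpretation.

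For (b), I would apply Theorem \ref{constit and validity} directly to the equational argument \eqref{PNN3}, whose premiss list is $p_0(\bfx)=0,\ q(\bfx)=0$ and whose conclusion is $q_j(\bfx)=0$. That theorem says \eqref{PNN3} is valid iff $\cC(q_j) \subseteq \cC(p_0) \plus \cC(q)$, which is exactly \eqref{PNN4}. So (b) is immediate (with $\cC(p_0) \plus \cC(q) = \cC(p_0 \plus q)$ by Lemma \ref{constit prop}(b) if one prefers to reduce the two premisses first via Theorem \ref{reduction}).

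For (a), the key observation is that both \eqref{PNN2} and \eqref{PNN3} are, by the definition of validity, equivalent to the single statement that no interpretation simultaneously satisfies $p_0(\bfx)=0$, $q(\bfx)=0$, and $q_j(\bfx)\neq 0$. Indeed, \eqref{PNN2} fails exactly when some interpretation makes $p_0=0$ and $q_j\neq 0$ true while falsifying the conclusion $q\neq 0$ (i.e.\ making $q=0$); and \eqref{PNN3} fails exactly when some interpretation makes $p_0=0$ and $q=0$ true while falsifying $q_j=0$ (i.e.\ making $q_j\neq 0$). These two failure conditions describe the same class of interpretations, so (a) follows by contraposition.

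For (c), the direction $(\Leftarrow)$ is monotonicity: if \eqref{PNN2} is valid for some $j$, then adjoining the remaining premisses $q_i(\bfx)\neq 0$ cannot create a falsifying interpretation, so \eqref{PNN1} is valid. The substantive direction $(\Rightarrow)$ I would prove by contraposition. Assume \eqref{PNN2} fails for every $j$; then by (a) and (b), \eqref{PNN4} fails for every $j$, so each $q_j$ has a constituent $\cC_\sigma(\bfx)\notin\cC(p_0)\plus\cC(q)$. Set $r(\bfx):=p_0(\bfx)\plus q(\bfx)$, so that $\cC(r)=\cC(p_0)\plus\cC(q)$ by Lemma \ref{constit prop}(b), and consider the canonical interpretation $I_r$. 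Since $\cC(p_0),\cC(q)\subseteq\cC(r)$, Lemma \ref{using Ip}(b) gives $I_r(p_0)=\O$ and $I_r(q)=\O$; and since $\cC(q_j)\not\subseteq\cC(r)$ for each $j$, the same lemma gives $I_r(q_j)\neq\O$ for every $j$. Thus $I_r$ makes every premiss of \eqref{PNN1} true while making the conclusion $q\neq 0$ false, so \eqref{PNN1} fails. The main obstacle is precisely this $(\Rightarrow)$ direction: a priori the failure of each \eqref{PNN2} only supplies, for each $j$ separately, its own witnessing interpretation, and these witnesses need not agree. The crucial point is that the single interpretation $I_r$ attached to $r=p_0\plus q$ makes \emph{every} constituent outside $\cC(r)$ nonempty at once, so one interpretation simultaneously witnesses all $n$ nonemptiness conditions, which is exactly what is needed to refute the joint premiss list of \eqref{PNN1}.
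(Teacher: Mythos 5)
Your proposal is correct and follows essentially the same route as the paper's own proof: (b) via Theorem \ref{constit and validity}, (a) by contraposition, and the substantive direction of (c) by the contrapositive, using the single canonical interpretation attached to $p_0 \plus q$ (the paper's $\widehat{p}$, your $r$) to witness all $n$ nonemptiness conditions simultaneously. Your closing remark correctly identifies the one genuinely non-routine idea in the argument.
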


\begin{proof}
Item (a) follows from simple propositional logic, namely the propositional formula 
$(P \wedge \neg Q) \rightarrow \neg R$
is equivalent to $(P \wedge R) \rightarrow Q$.

Item (b) follows from Theorem \ref{constit and validity}.

The direction $(\Leftarrow)$ of (c) clearly holds. So it only remains to show 
that if \eqref{PNN1} holds one has \eqref{PNN2} holding for some $j$---we will 
show the contrapositive.

 Suppose for every $j$, \eqref{PNN2} is false.
Then for each $j$, \eqref{PNN4} is false, by (a) and (b).
Let $\widehat{p} := p_0 \plus   q$. 
From the failure of \eqref{PNN4} for each $j$, one has 
$\cC(q_j) \nsubseteq \cC(\widehat{p})$ for each $j$. 
Consequently, by Lemma \ref{using Ip}(b), the interpretation
$I_{\widehat{p}}$ makes $\widehat{p}=0$ true, but $q_j=0$ false, for each $j$.
This means $I_{\widehat{p}}$ makes the premisses of the argument in 
\eqref{PNN1} true, but the conclusion false. 
Thus \eqref{PNN1} does not hold.

\end{proof}

Combining the above theorems, we see that the study of valid basic-formula
arguments reduces in a simple manner to the study of valid equational
arguments, which in turn reduces to comparing constituents of the terms
involved in the arguments. 

\subsection{Using Boole's $V$ in Valid Arguments}\label{sec on V}

So far we have adopted Schr\"oder's translation of particular propositions,
using $\neq 0$. Boole did not do this, but rather tried to use an equational
translation. Consider the proposition `Some $x$ is $y$'. In 1847 he used
$V = xy$ as his primary translation (see \cite{Boole-1847}, p.~20), 
where $V$ is a new idempotent 
symbol. In 1854 (see \cite{Boole-1854}, pp.~61-64) 
he changed the translation to $V\cdot x = V\cdot y$. Neither 
translation seems fully capable of doing what Boole claimed, although
the first seems closer to achieving his goals. There is a simple
intermediate translation, namely $V = V\cdot xy$, or equivalently,
$V \cdot (xy)'=0$, that works much better. We say it is an intermediate
translation because 
$V=xy \ \Rightarrow\ V=V \cdot xy\ \Rightarrow\ V\cdot x=V\cdot y$.

Let us call an equation of the form $V\cdot p(\bfx) = 0$ a {\bf $V$-equation}.
Next we show how the two-way translation 
$$
p\neq 0\quad \rightleftharpoons  \quad V\cdot p' = 0
$$
between negated equations and $V$-equations
 can be used in the study of valid
arguments, to fulfill Boole's goal of
a viable {\em equational} translation of particular statements.

First we look at the case of a single negative premiss, where we use
a simple fact about sets, namely
\begin{equation}\label{simple fact}
A\subseteq B\plus  C\quad\text{iff}\quad C' \subseteq B \plus  A'.
\end{equation}
This follows from noting that both sides are equivalent to $AB'C' = \O$.

\begin{theorem} \label{one V}
The following are equivalent:
\begin{thlist}
\item 
$$
\Valid\big( p = 0,\, q_0\neq 0  \ \therefore q \neq 0 \big)
$$
\item 
$$
\Valid\big( p = 0, \, V \cdot q_0'  = 0 \ \therefore V \cdot q' = 0 \big).
$$
\end{thlist}
\end{theorem}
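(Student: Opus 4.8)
The plan is to reduce both assertions (a) and (b) to one and the same condition on constituents taken over the variables $\bfx$, namely
\[
\cC(q_0)\ \subseteq\ \cC(p)\ \plus\ \cC(q),
\]
and then to conclude that (a) and (b) are equivalent because each is equivalent to this condition.

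First I would dispose of (a). It is a basic-formula argument with a single equational premiss, one negated premiss, and a negated conclusion, so Theorem \ref{NegRed}(a) (the propositional equivalence of $(P\wedge\neg Q)\to\neg R$ with $(P\wedge R)\to Q$, taking $q_j:=q_0$ and $p_0:=p$) converts it into the purely equational argument $p=0,\ q=0\ \therefore\ q_0=0$. By Theorem \ref{reduction} the two equational premisses collapse to $p\plus q=0$, and then Theorem \ref{constit and validity} shows that this equational argument is valid iff $\cC(q_0)\subseteq\cC(p)\plus\cC(q)$. Hence (a) holds iff the displayed constituent condition holds.

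Next I would treat (b). The key observation is that (b) is \emph{already} a purely equational argument, now in the enlarged variable list $\bfx,V$; so Theorem \ref{constit and validity} applies directly, after using Theorem \ref{reduction} to reduce the premisses $p=0,\ V\cdot q_0'=0$ to the single equation $p\plus V q_0'=0$. This gives that (b) is valid iff $\cC(V q')\subseteq\cC(p)\plus\cC(V q_0')$, where now every constituent is an $(\bfx,V)$-constituent, that is, of the form $\cC_\sigma(\bfx)\cdot V$ or $\cC_\sigma(\bfx)\cdot V'$. A routine application of the constituent calculus of Lemma \ref{constit prop} then yields, since the factor $V$ is present in both $V q'$ and $V q_0'$,
\[
\cC(V q')=\big\{\cC_\sigma(\bfx)V : \cC_\sigma(\bfx)\notin\cC(q)\big\},\qquad
\cC(V q_0')=\big\{\cC_\sigma(\bfx)V : \cC_\sigma(\bfx)\notin\cC(q_0)\big\},
\]
whereas $\cC(p)$, as $p$ does not involve $V$, contains \emph{both} the $V$- and $V'$-halves of each $\cC_\sigma(\bfx)\in\cC(p)$. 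Reading off the inclusion on a generic $V$-constituent $\cC_\sigma(\bfx)V$ with $\cC_\sigma(\bfx)\notin\cC(q)$ (the $V'$-constituents impose no constraint, since they never occur on the left), the inclusion says $\cC_\sigma(\bfx)\in\cC(p)$ or $\cC_\sigma(\bfx)\notin\cC(q_0)$; taking the contrapositive over all $\sigma$ gives exactly $\cC(q_0)\subseteq\cC(p)\plus\cC(q)$.

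Combining the two reductions yields (a) iff (b). The main obstacle is the bookkeeping in the second step: one must pass to constituents over the extended variable set $\bfx,V$ and verify that the factor $V$ forces $\cC(V q')$ and $\cC(V q_0')$ to lie entirely in the $V$-half, so that the $V'$-constituents drop out and the whole inclusion collapses to a statement about $\bfx$-constituents alone. As a sanity check one can note the fully semantic reading: letting the fresh variable $V$ range over all subsets, (b) says precisely that $p=0$ forces $q_0\subseteq q$ (for the nontrivial direction take $V:=q_0$), and $\Valid(p=0\ \therefore\ q_0 q'=0)$ is, by Theorem \ref{constit and validity}, again equivalent to the same constituent condition; the set-theoretic identity \eqref{simple fact} records the accompanying symmetry $\cC(q_0)\subseteq\cC(p)\plus\cC(q)$ iff $\cC(q')\subseteq\cC(p)\plus\cC(q_0')$, via the complement rule $\cC(t')=\cC(t)'$ of Lemma \ref{constit prop}.
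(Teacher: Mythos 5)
Your proof is correct and follows essentially the same route as the paper: both (a) and (b) are reduced, via propositional logic (Theorem \ref{NegRed}(a)) and Theorem \ref{constit and validity}, to the single constituent inclusion $\cC(q_0)\subseteq\cC(p)\plus\cC(q)$. The only cosmetic difference is that the paper first rewrites this inclusion as $\cC(q')\subseteq\cC(p)\plus\cC(q_0')$ using \eqref{simple fact} and then lifts to $(V,\bfx)$-constituents, whereas you lift first and unpack the $(\bfx,V)$-constituent inclusion directly; the underlying bookkeeping is identical.
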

\begin{proof}
By simple propositional logic reasoning, (a) is equivalent to
\begin{equation}\label{1V1}
\Valid \big( p = 0,\, q = 0 \ \therefore\ q_0 = 0 \big).
\end{equation}
By Theorem \ref{constit and validity}, assertion \eqref{1V1} is equivalent to
\begin{equation}\label{1V2}
 \cC(q_0) \ \subseteq\  \cC(p) \plus   \cC(q),
\end{equation}
 which, in view of \eqref{simple fact} and Lemma \ref{constit prop}(d), we can rewrite as
\begin{equation}\label{1V3}
\cC(q')\  \subseteq\  \cC(p) \plus   \cC(q_0').
\end{equation}
Looking at $(V,\bfx)$-constituents,
\eqref{1V3} is equivalent to
\begin{eqnarray} \label{1V4}
\cC(V \cdot q') 
&\subseteq& \cC(p) \plus   \cC(V \cdot  q_0'),
\end{eqnarray}
which, by Theorem \ref{constit and validity},
is equivalent to
$$
\Valid\big( p =0,\, V \cdot q_0' = 0 \ \therefore V \cdot q' = 0 \big).
$$
\end{proof}

When looking at the equivalence of (a) and (b) in this theorem it is
easy to jump to the conclusion that somehow one has been able
to replace each negated equation with an {\em equivalent} equation involving a
new symbol $V$; that, for example, the assertion ``Some $x$ is $y$'' is 
fully expressed by the equation $V=V \cdot (xy)$, or equivalently, $V \cdot (xy)'=0$.
If we think of $V$ as standing for `something', then these two equations
can be viewed as saying ``Something is in both $x$ and $y$''. That would
be useful as a mnemonic device, but the reality is that $xy \neq 0$ and
$V \cdot (xy)'=0$ do not express the same thing, that is, they are not equivalent.

One heuristic behind the use of a new constant $V$ is that 
one can express $t \neq 0$ with a formula
$$
(\exists V) \big(V \neq 0 \text{ and } V\subseteq t\big)
$$
which is equivalent to 
$$
(\exists V) \big(V \neq 0 \text{ and } V = V \cdot  t\big)
$$
as well as
$$
(\exists V) \big(V \neq 0 \text{ and } V \cdot  t' = 0 \big).
$$
When reasoning with such a formula it would be usual to say
``Choose such a $V$'', giving
$$
V \neq 0 \text{ and } V \cdot  t' = 0.
$$
This formula is, of course, not equivalent to the original $t\neq 0$, but it
implies the latter. When one drops the formula $V\neq 0$, then also this
implication fails, that is, neither of $t\neq 0$ and $V \cdot t' = 0$ implies the other.
Thus the equivalence of (a) and (b) in Theorem \ref{one V} is certainly
 not due to a simple replacement of basic formulas by equivalent formulas. 
 The fact that (a) and (b) are equivalent is based on the magic
 of the global interaction of formulas in an argument---it is something that
 one does not expect to be true, but it might be true, and through the 
 curiosity of exploration one discovers a proof that indeed the two are equivalent.

\begin{theorem} \label{Boole's method}
The following are equivalent:
\begin{thlist}
\item
$$
\Valid\big( p = 0,\ q_0\neq 0,\ \ldots,\ q_{n-1} \neq 0  \ \therefore q \neq 0 \big).
$$
\item
For some $j$, 
$$
\Valid\big( p = 0,\ q_j \neq 0  \ \therefore  q \neq 0 \big).
$$
\item
For some $j$, 
$$
\Valid\big( p = 0,\ V_j \cdot q_j' = 0  \ \therefore V_j \cdot q' = 0 \big).
$$

\item 
For some $j$, 
$$
\Valid\big( p = 0, \ V_0\cdot  q_0' = 0,\ \ldots,\ V_{n-1}\cdot  q_{n-1}' = 0 
\ \therefore V_j\cdot  q' = 0 \big).
$$
\end{thlist}
\end{theorem}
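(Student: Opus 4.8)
The plan is to prove the chain of equivalences (a)~$\Leftrightarrow$~(b)~$\Leftrightarrow$~(c)~$\Leftrightarrow$~(d), handling each link either with a tool already available or with a short direct argument. Two of the three links are essentially immediate, and the real content lives in the last one.

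For (a)~$\Leftrightarrow$~(b), I would invoke Theorem~\ref{NegRed}(c) verbatim: after the harmless re-indexing $q_1,\ldots,q_n\mapsto q_0,\ldots,q_{n-1}$, that result states exactly that the many-premiss argument with negated conclusion in (a) is valid iff, for some $j$, the corresponding single-premiss argument in (b) is valid. For (b)~$\Leftrightarrow$~(c), the idea is to apply Theorem~\ref{one V} \emph{termwise}. For each fixed $j$, Theorem~\ref{one V} (read with its $V$ taken to be $V_j$ and its $q_0$ taken to be $q_j$) gives that $\Valid\big(p=0,\ q_j\neq 0 \ \therefore q\neq 0\big)$ holds iff $\Valid\big(p=0,\ V_j\cdot q_j'=0 \ \therefore V_j\cdot q'=0\big)$ holds. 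Quantifying existentially over $j$ on both sides turns the ``for some $j$'' of (b) into the ``for some $j$'' of (c), giving the equivalence with no further work.

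The only link demanding a genuine argument is (c)~$\Leftrightarrow$~(d), which I expect to be the crux. Here I would prove the sharper statement that for \emph{each fixed} $j$ the single-premiss $V$-argument in (c) is valid iff the full $V$-argument in (d) is valid; the stated equivalence then follows by quantifying over $j$. The direction (c)~$\Rightarrow$~(d) is trivial, since adjoining the extra premisses $V_i\cdot q_i'=0$ (for $i\neq j$) to a valid argument preserves validity. For (d)~$\Rightarrow$~(c), the decisive point is that $V_0,\ldots,V_{n-1}$ are distinct \emph{new} symbols, and for $i\neq j$ the symbol $V_i$ occurs in none of $p=0$, $V_j\cdot q_j'=0$, or the conclusion $V_j\cdot q'=0$. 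So, given any interpretation $I$ with $I(p)=\O$ and $I(V_j\cdot q_j')=\O$, I would pass to the interpretation $I'$ obtained from $I$ by setting $I'(V_i):=\O$ for every $i\neq j$ and leaving all other values unchanged. Then each premiss $V_i\cdot q_i'=0$ with $i\neq j$ holds trivially, while $p=0$, $V_j\cdot q_j'=0$, and the value of $V_j\cdot q'$ are untouched. Validity of (d) forces $I'(V_j\cdot q')=\O$, whence $I(V_j\cdot q')=\O$, so (c) holds for the same $j$.

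The subtlety to keep in mind is precisely this freshness of the auxiliary symbols: the virtue of form (d) is that one may carry \emph{all} the $V$-equations along simultaneously and still extract a single-premiss validity, and this succeeds only because each unused $V_i$ can be zeroed out independently without perturbing the conclusion. This is the same ``global interaction of formulas'' phenomenon noted after Theorem~\ref{one V}, here reduced to simple bookkeeping with the distinct $V_i$.
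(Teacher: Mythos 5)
Your proposal is correct and follows the paper's proof essentially verbatim: (a)$\Leftrightarrow$(b) via Theorem~\ref{NegRed}(c), (b)$\Leftrightarrow$(c) via Theorem~\ref{one V} applied for each fixed $j$, (c)$\Rightarrow$(d) trivially, and (d)$\Rightarrow$(c) by setting the unused $V_i$ ($i\neq j$) to $\O$. Your write-up merely spells out in more detail the interpretation-modification step that the paper compresses into the phrase ``by setting all $V_i$, $i\neq j$, equal to $0$.''
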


\begin{proof}

By Theorem \ref{NegRed}(c), item (a) is equivalent to (b); and         
 by Theorem \ref{one V}, (b) is equivalent to (c).

Clearly {(c) implies (d)}. Now suppose (d) holds, and choose a $j$
such that the indicated argument is valid.
By setting all $V_i$, $i\neq j$, equal to $0$, we have {(d) implies (c)}.

\end{proof}

\section{Elimination and Solution} \label{sec E and S}

\subsection{A Single Equation}
The following gives Schr\"oder's version of Boole's results on elimination 
and solution
(p.~447 in \cite{Schr} Vol. I).
The elimination condition is the same as Boole's, but the solution is much simpler, 
thanks
to working with power-set algebras instead of Boole's system.
\begin{theorem}[{\bf Elimination and Solution Theorem}] \label{E and S}
Given a term $p(x,\bfy)$, 
the equation $p(x,\bfy)=0$ is $\PSA$-equivalent to
\begin{equation} \label{elim1}
p(1,\bfy)\cdot  p(0,\bfy)\ =\ 0\ \wedge\  (\exists z) \big[x \ =\ 
z'\cdot p(0,\bfy)\ \plus \  z\cdot p(1,\bfy)' \big] .
\end{equation}
$(\exists x)\big[p(x,\bfy) = 0\big]$ is $\PSA$-equivalent to 
\begin{equation} \label{eliminant}
p(1,\bfy)\cdot  p(0,\bfy)\ =\ 0.
\end{equation}
\item
Thus \eqref{eliminant} is the complete result of eliminating $x$ 
from the equation $p(x,\bfy)=0$;
 if this condition holds, then 
\begin{equation}\label{gen sol}
x \ =\ z'\cdot  p(0,\bfy)\ \plus\   z\cdot  p(1,\bfy)' 
\end{equation}
gives the general solution
of $p(x,\bfy)=0$ for $x$.
\end{theorem}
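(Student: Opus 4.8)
The plan is to reduce everything to Boole's Expansion Theorem applied to the single variable $x$, which yields the identity $p(x,\bfy) = p(1,\bfy)\cdot x \plus p(0,\bfy)\cdot x'$ (the $k=1$ case of Theorem \ref{expansion thm}, where the two $x$-constituents are just $x$ and $x'$). Since in a power-set algebra a union of two sets is empty exactly when both summands are empty, the equation $p(x,\bfy)=0$ is equivalent to the conjunction of $p(1,\bfy)\cdot x = 0$ and $p(0,\bfy)\cdot x' = 0$. Using the abbreviation $ab'=0 \leftrightarrow a\subseteq b$, the first says $x\subseteq p(1,\bfy)'$ and the second says $p(0,\bfy)\subseteq x$. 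Thus the solutions for $x$ are precisely the sets sandwiched in the interval $p(0,\bfy)\subseteq x\subseteq p(1,\bfy)'$, and this characterization is the engine behind both displayed equivalences.

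First I would settle the eliminant \eqref{eliminant}. A witness $x$ exists iff this interval is nonempty, i.e.\ iff its lower endpoint lies below its upper endpoint, $p(0,\bfy)\subseteq p(1,\bfy)'$, which by the same abbreviation is exactly $p(1,\bfy)\cdot p(0,\bfy)=0$. Since $(\exists x)$ ranges over all interpretations of $x$ agreeing with the given one off $x$, this shows $(\exists x)[p(x,\bfy)=0]$ is $\PSA$-equivalent to \eqref{eliminant}, proving the second assertion and identifying \eqref{eliminant} as the complete result of eliminating $x$.

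Next I would verify the parametrization \eqref{gen sol} under the standing assumption that the eliminant holds, equivalently $p(0,\bfy)\subseteq p(1,\bfy)'$. For any value of $z$, both summands of $z'\cdot p(0,\bfy)\plus z\cdot p(1,\bfy)'$ sit inside $p(1,\bfy)'$ (the first by the eliminant), so the upper containment $x\subseteq p(1,\bfy)'$ is automatic; the lower containment $p(0,\bfy)\subseteq x$ follows by a membership chase, since a point of $p(0,\bfy)$ lands in the first summand when it lies outside $z$ and in the second summand (using the eliminant) when it lies inside $z$. Hence every value of the expression is a genuine solution. Conversely, given any solution $x$, substituting $z:=x$ collapses the expression back to $x$, because $x'\cdot p(0,\bfy)=\O$ (as $p(0,\bfy)\subseteq x$) and $x\cdot p(1,\bfy)'=x$ (as $x\subseteq p(1,\bfy)'$). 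So, given the eliminant, lying in the interval is $\PSA$-equivalent to $(\exists z)[x = z'\cdot p(0,\bfy)\plus z\cdot p(1,\bfy)']$, which establishes \eqref{gen sol} as the general solution.

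Assembling the pieces gives \eqref{elim1}: the equation $p(x,\bfy)=0$ holds iff $x$ lies in the interval, and this is equivalent to the conjunction of the eliminant (forced whenever any solution exists) with the parametrization just verified; the case where the eliminant fails is automatically consistent, since then neither side holds for any $x$. I expect the main obstacle to be the parametrization step: the two containments and the back-substitution $z:=x$ each hinge on the eliminant $p(0,\bfy)\cdot p(1,\bfy)=0$, and correctly aligning the roles of $z'$ and $z$ (the minimum solution $p(0,\bfy)$ at $z=0$, the maximum solution $p(1,\bfy)'$ at $z=1$) is exactly where a sign-type error would most easily slip in.
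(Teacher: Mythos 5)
Your proposal is correct and follows essentially the same route as the paper's proof: expand $p(x,\bfy)=0$ about $x$, split the union into two equations, recast them as the interval condition $p(0,\bfy)\subseteq x\subseteq p(1,\bfy)'$, read off the eliminant as the nonemptiness of that interval, and recover the parametrization by the substitution $z:=x$. Your verification that every value of the parametric expression lies in the interval is spelled out a bit more carefully than the paper's ``clearly,'' but the argument is the same.
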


\begin{proof}
The equation $p(x,\bfy)=0$ can be written as
$$
p(1,\bfy)\cdot  x \ \plus  \ p(0,\bfy)\cdot  x'\ =\ 0,
$$
which is equivalent to 
$$
p(1,\bfy)\cdot  x = 0\ \wedge\  p(0,\bfy)\cdot  x' =  0, 
$$
which can be written as
\begin{equation} \label{sol cond}
p(0,\bfy)  \subseteq   x  \subseteq p(1,\bfy)'.
\end{equation}
There is an $x$ which makes \eqref{sol cond}
 true iff $p(0,\bfy) \subseteq p(1,\bfy)'$, that is,
iff $p(1,\bfy)\cdot  p(0,\bfy) = 0$. 
If \eqref{sol cond} is fulfilled, then\\
 (a) 
$x = x'\cdot  p(0,\bfy) \plus   x\cdot  p(1,\bfy)' $, 
so there is a $z$ as required by \eqref{elim1};
and\\
 (b) if $x = z'\cdot  p(0,\bfy) \plus   z \cdot  p(1,\bfy)' $ for some $z$, then clearly
\eqref{sol cond} holds.
\end{proof}

Schr\"oder (p.~460 of \cite{Schr}, Vol. I) credits Boole with the previous elimination theorem, 
calling it Boole's Main Theorem, even though Boole claimed this result for his own algebra of
logic, not the modern one presented here. 
Likewise we credit  Boole with the next result.
\begin{corollary} [{\bf Boole's Elimination Theorem}]
$(\exists \bfx)\big[p(\bfx,\bfy) = 0\big]$ is $\PSA$-equivalent to 
\begin{equation} \label{GenElim}
0\ =\ \prodkt_{\sigma\in 2^{\wtk}} p(\sigma,\bfy),
\end{equation}
the complete result of eliminating $\bfx$ from $p(\bfx,\bfy)=0$.
\end{corollary}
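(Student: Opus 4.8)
The plan is to argue by induction on $k$, the length of the list $\bfx = x_1,\ldots,x_k$ of variables being eliminated, using the single-variable result \eqref{eliminant} of Theorem \ref{E and S} both as the base case and as the engine of the inductive step. For the base case $k=1$, the claim is literally \eqref{eliminant}: $(\exists x_1)\big[p(x_1,\bfy)=0\big]$ is $\PSA$-equivalent to $p(1,\bfy)\cdot p(0,\bfy)=0$, and the right-hand side is exactly $\prodkt_{\sigma\in 2^{\widetilde{1}}} p(\sigma,\bfy)$.

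For the inductive step, I would assume the result for lists of length $k-1$ and strip off the innermost quantifier $(\exists x_k)$ first. Treating $x_1,\ldots,x_{k-1}$ as part of the parameter list alongside $\bfy$, Theorem \ref{E and S} gives that $(\exists x_k)\big[p(\bfx,\bfy)=0\big]$ is $\PSA$-equivalent to $q(x_1,\ldots,x_{k-1},\bfy)=0$, where
$$
q(x_1,\ldots,x_{k-1},\bfy)\ :=\ p(x_1,\ldots,x_{k-1},1,\bfy)\cdot p(x_1,\ldots,x_{k-1},0,\bfy).
$$
Since $\PSA$-equivalence of formulas is preserved under prefixing an existential quantifier (immediate from the clause for $\exists$ in the definition of truth), I may prefix $(\exists x_1)\cdots(\exists x_{k-1})$ to both sides, so that $(\exists\bfx)\big[p(\bfx,\bfy)=0\big]$ is $\PSA$-equivalent to $(\exists x_1)\cdots(\exists x_{k-1})\big[q=0\big]$. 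Applying the induction hypothesis to $q$, the latter is in turn $\PSA$-equivalent to $\prodkt_{\tau\in 2^{\widetilde{k-1}}} q(\tau,\bfy)=0$.

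The final step is a routine re-indexing of the product. Expanding each factor,
$$
\prodkt_{\tau\in 2^{\widetilde{k-1}}} q(\tau,\bfy)\ =\ \prodkt_{\tau\in 2^{\widetilde{k-1}}}\big[\,p(\tau,1,\bfy)\cdot p(\tau,0,\bfy)\,\big],
$$
and each pair $(\tau,i)$ with $\tau\in 2^{\widetilde{k-1}}$ and $i\in\{0,1\}$ corresponds to a unique $\sigma\in 2^{\widetilde{k}}$, obtained by extending $\tau$ with $\sigma_k=i$. Under this bijection the double intersection on the right is exactly $\prodkt_{\sigma\in 2^{\widetilde{k}}} p(\sigma,\bfy)$, which closes the induction.

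I expect no genuine obstacle here; the two points requiring care are (i) the observation that Theorem \ref{E and S} may be applied with $x_k$ singled out and the remaining $x$'s absorbed into the parameter list, which is legitimate because that theorem holds for any term $p(x,\bfy)$ with $\bfy$ an arbitrary list of other variables, and (ii) the bookkeeping in the re-indexing, where one must verify that splitting off the last coordinate of $\sigma$ matches precisely the two factors making up $q$.
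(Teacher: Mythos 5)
Your proof is correct and is exactly the argument the paper intends: the corollary is stated without proof as an immediate consequence of the single-variable Elimination and Solution Theorem, the implicit justification being precisely the iterated (inductive) application of \eqref{eliminant} one variable at a time, with the re-indexing of the resulting nested product over $2^{\wtk}$ that you spell out. No gaps.
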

Schr\"oder does not give a general formula to find the solution for $\bfx$ 
in $p(\bfx,\bfy)=0$ as a function of $\bfy$ when $\bfx$ is a list of more 
than one variable.

\subsection{Schr\"oder's Elimination Program}
Schr\"oder goes on to set up an ambitious program to tackle 
quantifier elimination for arbitrary open (i.e., quantifier-free) formulas
$\omega(\bfx,\bfy)$. Every open formula is equivalent to a disjunction of
conjunctions of basic formulas. Since
$$
(\exists \bfx) \big[ \varphi_1(\bfx,\bfy)\vee\ \ldots\ \vee \varphi_m(\bfx,\bfy) \big]
$$
is equivalent to
$$
(\exists \bfx) \big[ \varphi_1(\bfx,\bfy) \big]\vee \ \ldots\ \vee
(\exists \bfx) \big[ \varphi_m(\bfx,\bfy) \big] ,
$$
quantifier elimination for open formulas reduces to quantifier elimination 
for conjunctions of basic formulas, that is, to formulas of the form
$$
(\exists \bfx) \big[ p(\bfx,\bfy)=0\ \wedge\ 
q_1(\bfx,\bfy) \neq 0\ \wedge\ \cdots \ \wedge\ q_n(\bfx,\bfy) \neq 0 \big].
$$
(Only one equation is needed in view of the Reduction Theorem.)

\section{Elimination and Solution for Basic Formulas}
Schr\"oder extended his Theorem \ref{E and S} to the case of one equation
and one negated equation (see Corollary \ref{one-one} below). 
Our next result extends the parametric solution portion of Schr\"oder's result
to include any number of negated equations.

\subsection{Parametric solutions to systems of basic formulas}
\begin{theorem} \label{parametric solns}
Given a system 
\begin{equation} \label{sys}
p(x,\bfy)=0, \ q_1(x,\bfy)\neq 0,\ \ldots,\ q_n(x,\bfy)\neq 0
\end{equation}
of basic formulas, write them in the form  {\rm (by setting $a(y)= p(1,y), b(y)= p(0,y)$, etc.)}
\begin{align*}
a(\bfy) \cdot   x   \  \plus\ b(\bfy) \cdot  x'         &= 0\\
c_1(\bfy) \cdot   x\ \plus\ d_1(\bfy) \cdot  x' &\neq 0\\
\ \vdots&\\
c_n(\bfy) \cdot  x\ \plus\ d_n(\bfy) \cdot  x'   &\neq 0.
\end{align*}
Let $\varphi(\bfy,\bfv)$ be the conjuction of the formulas:
\begin{align*}
a(\bfy) \cdot  b(\bfy)  &= 0 \nonumber\\
0 \neq v_i &\subseteq c_i(\bfy) \cdot   a'(\bfy) \ \plus\ d_i(\bfy) \cdot   
b'(\bfy)  \nonumber\\
 v_i\cap v_j &\subseteq c_i(\bfy)\ \plus\ d_j(\bfy) \cdot   c_j(\bfy) \ 
 \plus\ d_i(\bfy) \quad \text{for }i \neq j
\end{align*}
Then the system \eqref{sys} is $\PSA$-equivalent to
\begin{align}\label{eform}
(\exists w)&(\exists v_1)\cdots (\exists v_k)\bigg[
\ \varphi \ \wedge \ 
\displaystyle  \bigg(x = w\cdot \Big(a \plus \summe_{i\in \wtk} v_i  {c_i}'\Big)' \ \plus\ 
w\,' \cdot \Big(b \plus \summe_{i\in \wtk} v_i {d_i}'\Big)\bigg)
\bigg].
\end{align}
\end{theorem}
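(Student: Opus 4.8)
The plan is to reduce the mixed system \eqref{sys} to a pure ``bounds on $x$'' problem, in two moves: first replace each negated equation by an existentially quantified nonempty witness, then invoke the parametric form of the Elimination and Solution Theorem (Theorem \ref{E and S}). First I would record the elementary equivalence
$$
q_i(x,\bfy)\neq 0\quad\Longleftrightarrow\quad (\exists v_i)\big(v_i\neq 0\ \wedge\ v_i\subseteq q_i(x,\bfy)\big),
$$
and pull the witnesses $v_1,\ldots,v_n$ to the front, so that \eqref{sys} becomes $(\exists\bfv)\big[\,p=0\ \wedge\ \bigwedge_i(v_i\neq 0\wedge v_i\subseteq q_i)\big]$. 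Using Boole's Expansion Theorem (Theorem \ref{expansion thm}) to write $p=a\cdot x\plus b\cdot x'$ and $q_i=c_i\cdot x\plus d_i\cdot x'$, I would translate each inner conjunct into a constraint on $x$: the equation $p=0$ is $a\cdot x=0\wedge b\cdot x'=0$, i.e.\ $b\subseteq x\subseteq a'$; and, complementing the right-hand side and using $x\cdot x'=0$, the containment $v_i\subseteq q_i$ unpacks into $v_i\cdot c_i'\cdot d_i'=0$, $v_i\cdot d_i'\subseteq x$, and $v_i\cdot c_i'\subseteq x'$.

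Collecting the lower and upper bounds, for fixed $\bfy,\bfv$ the conditions on $x$ are exactly
$$
B^*\ :=\ b\plus\summe_i v_i\cdot d_i'\ \subseteq\ x\ \subseteq\ \Big(a\plus\summe_i v_i\cdot c_i'\Big)'\ =:\ A^*,
$$
together with the $x$-free requirements $v_i\cdot c_i'\cdot d_i'=0$ and $v_i\neq 0$. Applying Theorem \ref{E and S} to the bound constraint $B^*\subseteq x\subseteq A^*$, such an $x$ exists iff $B^*\subseteq A^*$, and then the general solution is $x=w'\cdot B^*\plus w\cdot A^*$, which is precisely the $x$-expression displayed in \eqref{eform} (with $w$ the parameter). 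This identifies the parametric solution and reduces the whole theorem to the purely $\bfy,\bfv$-level claim that the remaining $x$-free requirements are together equivalent to $\varphi(\bfy,\bfv)$.

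To establish that equivalence I would expand the consistency condition $B^*\subseteq A^*$ as $B^*\cdot (A^*)'=0$ and distribute, obtaining four families of vanishing terms: $a\cdot b=0$; the single-index terms $b\cdot v_j\cdot c_j'=0$ and $a\cdot v_i\cdot d_i'=0$; and the off-diagonal products $v_i\cdot v_j\cdot d_i'\cdot c_j'=0$ for $i\neq j$. The first three families, together with $v_i\cdot c_i'\cdot d_i'=0$, are exactly repackaged by the conjuncts $a\cdot b=0$ and $0\neq v_i\subseteq c_i\cdot a'\plus d_i\cdot b'$ of $\varphi$: complementing the right-hand side of the latter reads off $v_i\cdot c_i'\cdot d_i'=v_i\cdot c_i'\cdot b=v_i\cdot a\cdot d_i'=0$, and also forces $v_i\subseteq c_i\plus d_i$. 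So after this bookkeeping the two directions of the $\PSA$-equivalence follow by assembling the reductions above, once the off-diagonal conditions are matched with the pairwise conjuncts of $\varphi$.

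The main obstacle is exactly this last matching: showing that, under the single-index conditions and the hypotheses $v_i\neq 0$, the pairwise conjuncts $v_i\cap v_j\subseteq c_i\plus d_j\cdot c_j\plus d_i$ of $\varphi$ are equivalent to the off-diagonal consistency conditions $v_i\cdot v_j\cdot d_i'\cdot c_j'=0$. This is the delicate combinatorial heart of the argument, and I expect it to be where all the real work lies; the matching cannot be done for one ordered pair in isolation, since one must combine the conjunct for both orders $(i,j)$ and $(j,i)$ with the side conditions $v_i\subseteq c_i\plus d_i$ and $v_j\subseteq c_j\plus d_j$. I would therefore reduce to a finite case analysis on the atoms determined by $c_i,d_i,c_j,d_j$ and check, atom by atom, that the stated pairwise bound is neither too strong (so that every genuine solution of \eqref{sys} is recovered, giving the forward direction) nor too weak (so that \eqref{eform} cannot overgenerate a spurious $x$ with some $q_i(x)=0$, giving the converse). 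Pinning down the precise form of the pairwise bound that makes this equivalence hold — and confirming that it is the form written in $\varphi$ — is the step I would scrutinize most carefully, since everything else is the routine application of Theorems \ref{expansion thm} and \ref{E and S} and simple propositional reasoning.
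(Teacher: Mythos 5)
Your proposal follows essentially the same route as the paper: introduce nonempty witnesses $v_i \subseteq q_i$, collapse everything to a single constraint $B^* \subseteq x \subseteq A^*$ (the paper gets there via the Reduction Theorem followed by expansion about $x$, which is the same computation), apply Theorem \ref{E and S} to get the parametric form for $x$, and then expand the consistency condition $B^*\cdot (A^*)' = 0$ as a polynomial in the $v_i$. The one place you stop short --- matching the off-diagonal terms with the pairwise conjuncts of $\varphi$ --- is not the delicate combinatorial heart you fear, and needs no case analysis on atoms: combining the two ordered conditions $v_i\cdot v_j\cdot d_i'\cdot c_j' = 0$ and $v_i\cdot v_j\cdot d_j'\cdot c_i' = 0$ into $v_i\cdot v_j\cdot\big(d_i'\cdot c_j' \plus d_j'\cdot c_i'\big) = 0$ and complementing by De Morgan gives directly $v_i\cap v_j \subseteq (c_i \plus d_j)\cdot(c_j\plus d_i)$, which is exactly what the paper's proof derives. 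Your instinct to scrutinize whether this agrees with the conjunct displayed in $\varphi$ is sound, however: as printed (with intersection binding tighter than union) the expression $c_i \plus d_j\cdot c_j \plus d_i$ is not equal to $(c_i\plus d_j)\cdot(c_j\plus d_i)$, so the mismatch you sensed is a typographical slip in the theorem statement rather than a gap in your argument.
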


\begin{proof}
The original system \eqref{sys} is clearly $\PSA$-equivalent to
$$
(\exists v_1) \cdots (\exists v_k) \bigg[
 \bigwedge_{j\in \wtk} (v_j \neq 0)\ \wedge \ 
 (p = 0)\ \wedge\ \bigwedge_{j\in\wtk} (v_j \subseteq q_j) \bigg] .
$$
The formula
$$
 (p = 0)\ \wedge\ \bigwedge_{j\in\wtk} \big(v_j \subseteq q_j \big)
$$
is $\PSA$-equivalent to the conjunction of equations
$$
 (p = 0)\ \wedge\ \bigwedge_{j\in\wtk} \big(v_j  q_j' = 0 \big).
$$
Reduce this, by Theorem \ref{reduction}, to a single equation
$$
 \big( a  x \cup b x' \big)\ \plus \ 
 \Big( \summe_{j\in\wtk}v_j \cdot \big(c_j x \plus d_j x'\big)'  \Big) \ =\ 0,
$$
and expand it about $x$ to obtain
$$
\Big( a \plus \summe_{j\in\wtk} v_j  {c_j}'\Big)\cdot  x \ \plus\ 
\Big( b \plus \summe_{j\in\wtk} v_j  {d_j}'\Big)\cdot  x' \ =\ 0.
$$
This is $\PSA$-equivalent, by Theorem \ref{E and S}, to the conjunction of the
two formulas
\begin{eqnarray}
0 &=& \Big( a \plus \summe_{j\in\wtk} v_j  {c_j}'\Big)  \cdot 
\Big( b \plus \summe_{j\in\wtk} v_j  {d_j}'\Big) \label{eqpt}\\
(\exists w) \bigg[ x &=&  w \cdot   \Big( a \plus \summe_{j\in\wtk} v_j  {c_j}'\Big)'
\ \plus\  w\,'  \cdot  \Big( b \plus \summe_{j\in\wtk} v_j  {d_j}'\Big)\bigg].
\end{eqnarray}
Expanding \eqref{eqpt} as a polynomial in the $v_j$ transforms 
it into the conjunction of the equations
\begin{eqnarray*}
0&=&a  b\\
0 &=& v_j  \cdot  \big( a  {d_j}' \plus b  {c_j}' 
\plus {c_j}' {d_j}' \big) \quad \text{for }j \in \wtk\\
0 &=& v_i  v_j  \cdot  \big( {c_i}'  {d_j}' \plus {c_j}'  {d_i}'  \big)
 \quad \text{for }i, j \in \wtk, i\neq j,
\end{eqnarray*}
which can be rewritten as
\begin{eqnarray*}
0&=& a  b\\
0 &=& v_j  \cdot  \big( c_j  a' \plus d_j  b\,' \big)' \qquad \qquad \text{for }j \in \wtk\\
0 &=& v_i  v_j \cdot  \big( (c_i \plus d_j)  \cdot  (c_j \plus d_i) \big)'
 \qquad \text{for }i, j \in \wtk, i\neq j.
\end{eqnarray*}
Thus \eqref{sys} is $\PSA$-equivalent to the existence of $\bfv$ and $w$ such that:   
\begin{eqnarray*}
0&=& a  b\\
0 &\neq & v_j\ \subseteq \  c_j  a' \plus d_j  b\,' \quad \text{for }j \in \wtk\\
v_i  v_j &\subseteq&  (c_i \plus d_j) \cdot   (c_j \plus d_i) 
\quad \quad \quad \text{for } i, j \in \wtk, i\neq j\\
x &=& \Big[ w \cdot   \Big( a \plus \summe_{j\in\wtk} v_j  {c_j}'\Big)'\,\Big]
\ \plus\ \Big[ w\,'  \cdot  \Big( b \plus \summe_{j\in\wtk} v_j  {d_j}'\Big)\Big].
\end{eqnarray*}

\end{proof}
Note that in \eqref{eform}, the restrictions on the parameters
 $v_i$ are very simple, and there is no restriction on the parameter $w$. 

If one restricts the above to the case where there is exactly one
negated equation, then one has a full elimination result, extending
Theorem \ref{E and S}. (See p.~205-209 of \cite{Schr} Vol. II)
\begin{corollary} [{\bf Schr\"oder}] \label{one-one}
The formula
\begin{equation}\label{PN1}
(\exists x) \big[p(x,\bfy) =0\ \wedge\ q(x,\bfy) \neq 0\big]
\end{equation}
 is $\PSA$-equivalent to 
\begin{equation}\label{PN2}
\big[p(1,\bfy)\cdot p(0,\bfy)\ =\ 0 \big]\ \wedge\ 
\big[ q(1,\bfy) \cdot p(1,\bfy)' \ \plus   \ q(0,\bfy) \cdot p(0,\bfy)'\ \neq\ 0 \big].
\end{equation}
\end{corollary}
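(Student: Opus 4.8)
The plan is to derive this corollary as the special case $n=1$ of Theorem \ref{parametric solns}. Following that theorem, with a single negated equation I set $a(\bfy)=p(1,\bfy)$, $b(\bfy)=p(0,\bfy)$, $c_1(\bfy)=q(1,\bfy)$ and $d_1(\bfy)=q(0,\bfy)$. The decisive simplification is that the parameter list collapses to a single $v_1$, so the ``cross'' conditions $v_i\cap v_j\subseteq\cdots$ for $i\neq j$ in the definition of $\varphi$ are vacuous. Hence $\varphi(\bfy,v_1)$ is merely the conjunction of $a(\bfy)\cdot b(\bfy)=0$ with $0\neq v_1\subseteq c_1(\bfy)\cdot a'(\bfy)\plus d_1(\bfy)\cdot b'(\bfy)$.

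First I would invoke Theorem \ref{parametric solns} to rewrite the system \eqref{sys} (with $n=1$) as the parametric formula \eqref{eform}, and then apply $(\exists x)$ to both sides, since \eqref{PN1} is exactly the existential closure of the system over $x$; $\PSA$-equivalence is preserved under prefixing a quantifier. The key observation is that the quantifier $(\exists x)$ then becomes redundant: the parametric equation $x = w\cdot\big(a\plus v_1 {c_1}'\big)'\plus w'\cdot\big(b\plus v_1 {d_1}'\big)$ is solvable for \emph{every} choice of the parameters, so for fixed $\bfy,w,v_1$ one has $(\exists x)[\varphi\wedge(x=\cdots)]\Leftrightarrow\varphi$. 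Because $\varphi$ does not mention $w$, the quantifier $(\exists w)$ is likewise redundant, and \eqref{PN1} reduces to $(\exists v_1)\,\varphi(\bfy,v_1)$.

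It then remains to eliminate $v_1$. Since $a\cdot b=0$ contains no $v_1$, it factors out of the existential, leaving $(\exists v_1)\big[0\neq v_1\subseteq c_1 a'\plus d_1 b'\big]$. This last formula holds precisely when the set $c_1 a'\plus d_1 b'$ is nonempty---take $v_1$ to be that set when it is nonempty, while no nonempty subset exists when it is empty---that is, when $c_1 a'\plus d_1 b'\neq 0$. Substituting the values of $a,b,c_1,d_1$ back in converts the two surviving conditions into $p(1,\bfy)\cdot p(0,\bfy)=0$ and $q(1,\bfy)\cdot p(1,\bfy)'\plus q(0,\bfy)\cdot p(0,\bfy)'\neq 0$, which is exactly \eqref{PN2}.

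I expect no genuine obstacle, since the corollary is engineered as a specialization; the one point requiring care is the clean projection of $(\exists x)$ and $(\exists w)$---verifying that the parametric equation for $x$ is unconditionally solvable, so that both quantifiers may be discarded without altering the truth value. Once that is settled, the remainder is bookkeeping together with the single observation that $(\exists v)\big[0\neq v\subseteq S\big]\Leftrightarrow S\neq 0$.
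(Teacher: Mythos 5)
Your proposal is correct and follows essentially the same route as the paper: specialize Theorem \ref{parametric solns} to $n=1$, discard the now-vacuous cross conditions, project away $(\exists x)$ and $(\exists w)$ (which the paper does silently and you justify explicitly), and finish with the observation that $(\exists v)\big[0\neq v\subseteq S\big]$ holds iff $S\neq 0$.
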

\begin{proof}
By Theorem \ref{parametric solns},
\begin{equation}\label{PN1}
(\exists x) \big[p(x,\bfy) =0\ \wedge\ q(x,\bfy) \neq 0\big]
\end{equation}
is equivalent to the following, where mention of the $\bfy$'s 
has been suppressed:
$$
(\exists z) 
\Big[ \big(
p(1) \cdot  p(0)  = 0 \big) \ \wedge\ 
\big(0 \neq z \subseteq q(1) \cdot p(1)' \ \plus\ q(0) \cdot b(0)'  \big) \Big],
$$
which in turn is equivalent to \eqref{PN2}.
\end{proof}

The difficulties with quantifier elimination for a conjunction of basic formulas
starts with two negated equations. The simplest example to illustrate this is
try to eliminate $x$ from 
\begin{equation} \label{2NE}
(\exists x)\Big(x y \neq 0 \ \wedge x' y \neq 0\Big).
\end{equation}
Clearly $y \neq 0$ follows from \eqref{2NE}.
However this is not equivalent to \eqref{2NE} because there will 
be an $x$ as in \eqref{2NE} iff
$y$ has at least 2 elements. The formula \eqref{2NE} is equivalent to
the expression
\begin{equation} \label{2QE}
 |y| \ge 2.
\end{equation}
Unfortunately there is no way to express $|y|\ge 2$ by an open formula
 in the language of power-set algebra that we are using 
 (with fundamental operations $\plus  ,\cap,'$ and constants $0,1$). 
 In modern terminology, power-set algebra does not admit elimination of quantifiers.

Nonetheless, Schr\"oder struggled on, showing how admitting symbols for 
{\em elements}
of sets would allow him to carry out elimination for the case of eliminating one
variable. Then he says that the result for eliminating two variables would follow
similar reasoning, but would be much more complicated, etc. 
(See \S49 in \cite{Schr} Vol. II.)

In 1919 Skolem \cite{Skolem-1919} gave an elegant improvement on Schr\"oder's 
approach to elimination by showing that if one adds the predicates 
$|\ |\ge n$, for $n\ge 0$, then this augmented version of power-set 
algebra has a straightforward procedure for the elimination of quantifiers.

\subsection{Using Boole's $V$ in Elimination} \label{sec on Velim}
In $\S$\ref{sec on V} we saw that one could replace negated equations by $V$-equations,
in the spirit of Boole, when studying the  validity of arguments. 
Now we ask if one can do the same when working with elimination. 
Since Schr\"oder's 
elimination, in the original language of power-set algebras, halts with one negated
equation, we will restrict our attention to considering a $V$-version of 
Theorem \ref{one-one}. 
We will use a $V$-translation to convert the negated equation in the premiss into an equation,
apply Boole' elimination result to the two premisses, split this into an equation and a 
$V$-equation, and then convert the $V$-equation back into a negated equation. 
It will be noted
that this method gives the correct answer found by Schr\"oder.
Thus,
for example, one can apply the $V$-method to derive the valid syllogisms which
have one particular premiss and one universal premiss.

\begin{theorem} \label{V one-one}
The $V$-translation of 
\begin{equation}\label{PN1 V0}
(\exists x) \big[p(x,\bfy) =0\ \wedge\ q(x,\bfy) \neq 0\big]
\end{equation}
is
\begin{equation}\label{PN1 V1}
(\exists x) \big[p(x,\bfy) =0\ \wedge\ V \cdot q'(x,\bfy) = 0\big].
\end{equation}
Eliminating $x$ using Theorem \ref{E and S} gives
\begin{equation}\label{PN1 V2}
p(1,\bfy) \cdot p(0,\bfy)\ \plus  \ V \cdot \Big(q(1,\bfy) \cdot p(1,\bfy)' \ \plus  
\ q(0,\bfy) \cdot p(0,\bfy)' \Big)'\ =\ 0,
\end{equation}
which translates back into
\begin{equation}\label{PN1 V3}
\big[p(1,\bfy) \cdot p(0,\bfy)\ =\ 0 \big]\ \wedge\ 
\big[ q(1,\bfy) \cdot  p(1,\bfy)' \ \plus   \ q(0,\bfy) \cdot  p(0,\bfy)'\ \neq\ 0 \big].
\end{equation}
\end{theorem}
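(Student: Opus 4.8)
The statement packages three verifications, one per displayed line, so the plan is to check each in turn and then observe that the endpoint \eqref{PN1 V3} is literally Schr\"oder's \eqref{PN2} from Corollary \ref{one-one}, which is the real point. First, the passage from \eqref{PN1 V0} to \eqref{PN1 V1} is nothing but the forward direction of the two-way translation $q \neq 0 \rightleftharpoons V\cdot q' = 0$ from $\S$\ref{sec on V}, applied to the single negated equation $q(x,\bfy)\neq 0$; this line needs no argument beyond naming that translation.

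For the elimination step I would first combine the two equations of \eqref{PN1 V1} into one by the Reduction Theorem (Theorem \ref{reduction}), obtaining $p(x,\bfy)\plus V\cdot q'(x,\bfy)=0$, and then apply the Elimination and Solution Theorem (Theorem \ref{E and S}) with $x$ as the eliminated variable and $(V,\bfy)$ as parameters. Abbreviating $a=p(1,\bfy)$, $b=p(0,\bfy)$, $c=q(1,\bfy)$, $d=q(0,\bfy)$, and using that substitution commutes with complement so that $q'(1,\bfy)=c'$ and $q'(0,\bfy)=d'$, the eliminant Theorem \ref{E and S} produces is $\big(a\plus Vc'\big)\cdot\big(b\plus Vd'\big)=0$. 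The substantive work is to recognize this as \eqref{PN1 V2}. Expanding the left side by distributivity and using $V\cdot V=V$ gives $ab\plus V\big(ad'\plus bc'\plus c'd'\big)$, whereas expanding the coefficient of $V$ in \eqref{PN1 V2} by De Morgan gives $\big(ca'\plus db'\big)'=c'd'\plus c'b\plus ad'\plus ab$, so that \eqref{PN1 V2} reads $ab\plus V\big(c'd'\plus c'b\plus ad'\plus ab\big)=0$.

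Comparing the two, they differ only by the summand $V\cdot ab$ present in \eqref{PN1 V2}; since $V\cdot ab\subseteq ab$ and the term $ab$ already stands alone, absorption collapses the difference and the two equations are identical. I expect this absorption to be the one spot where care is needed, so it is the crux of the verification.

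Finally, for the back-translation of \eqref{PN1 V2} into \eqref{PN1 V3}: the equation \eqref{PN1 V2} has the shape $ab\plus V\cdot\beta'=0$ with $\beta=ca'\plus db'$, so by the Reduction Theorem used in the reverse direction it splits into the conjunction $ab=0 \wedge V\cdot\beta'=0$, and then the reverse direction of the same $V$-translation turns $V\cdot\beta'=0$ back into $\beta\neq 0$, giving exactly \eqref{PN1 V3}. Here I would stress, echoing the discussion after Theorem \ref{one V}, that this last move is a formal translation rather than a logical equivalence; the theorem's content is that the syntactic $V$-procedure reproduces Schr\"oder's genuine elimination result \eqref{PN2}, not that each intermediate line is equivalent to the original \eqref{PN1 V0}.
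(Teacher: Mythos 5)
Your proposal is correct and follows essentially the same route as the paper's proof: translate the negated equation to a $V$-equation, reduce to the single equation $p\plus V\cdot q'=0$, apply Theorem \ref{E and S} to get $\big(p(1,\bfy)\plus V\cdot q(1,\bfy)'\big)\cdot\big(p(0,\bfy)\plus V\cdot q(0,\bfy)'\big)=0$, multiply out, split, and translate back. The only (cosmetic) difference is at the crux you identify: the paper first splits off $p(1,\bfy)\cdot p(0,\bfy)=0$ and uses that equation as a side condition to rewrite the $V$-part via De Morgan, whereas you absorb the stray summand $V\cdot ab$ into the standalone $ab$ at the level of the combined term---both amount to the same calculation.
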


\begin{proof}
The $V$-translation converts 
$$
p(x,\bfy) =0\ \wedge\ q(x,\bfy) \neq 0
$$
into
$$
p(x,\bfy) =0\ \wedge\ V \cdot q(x,\bfy)' = 0.
$$
Reducing this to a single equation gives
$$
p(x,\bfy)\ \plus\   V \cdot q(x,\bfy)' = 0.
$$
The complete result of eliminating $x$ is, by Theorem \ref{E and S},
$$
\Big( p(1,\bfy)\ \plus\   V \cdot q(1,\bfy)' \Big)  \cdot  \Big( p(0,\bfy)\ \plus \  
V \cdot q(0,\bfy)' \Big) \ =\ 0.
$$
Multiplying this out gives
$$
p(1,\bfy)  \cdot p(0,\bfy)\ \plus  \ V \cdot  \Big(p(1,\bfy) \cdot q(0,\bfy)' \ \plus  \ 
p(0,\bfy) \cdot q(1,\bfy)' \ \plus  \ q(0,\bfy)' \cdot q(1,\bfy)'\Big)\ =\ 0,
$$
which is equivalent to the two equations
\begin{align}
p(1,\bfy)  \cdot p(0,\bfy) &= 0  \label{Veq1}\\
V \cdot  \Big(p(1,\bfy) \cdot q(0,\bfy)' \ \plus  \ p(0,\bfy) \cdot q(1,\bfy)' \ \plus  
\ q(0,\bfy)' \cdot q(1,\bfy)'\Big)\ &=\ 0. \label{Veq2}
\end{align}
In view of \eqref{Veq1}, 
the equation \eqref{Veq2} is equivalent to
\begin{align*}
V \cdot \Big(q(1,\bfy) \cdot p(1,\bfy)' \ \plus  \ q(0,\bfy) \cdot p(0,\bfy)' \Big)'\ &=\ 0,\\
\intertext{which translates back to the negated equation}
q(1,\bfy) \cdot p(1,\bfy)' \ \plus  \ q(0,\bfy) \cdot p(0,\bfy)' \ &\neq\ 0.
\end{align*}
This, combined with \eqref{Veq1}, gives \eqref{PN1 V3}, 
the same result as in Theorem \ref{one-one}.
\end{proof}

\begin{eqnarray*}
  t(\bfx) 
\ = \  \begin{cases}
0&\text{if }\quad \PSA \models t(\bfx)=0\\
\summe_{\substack{\sigma \in 2^{\wtk}\\t(\sigma)\neq 0}}  \cC_\sigma(\bfx) 
& \text{otherwise}.
\end{cases}
\end{eqnarray*}

\end{document}